\newtheorem{theorem}{{\sc Theorem}}[section]
\newtheorem{lemma}[theorem]{{\sc Lemma}}
\newtheorem{remark}[theorem]{Remark}
\newcommand{\nth}[1]{\displaystyle\frac{1}{#1}}
\newcommand{\dif}[2]{\displaystyle\frac{\partial #1}{\partial #2}}
\newcommand{\Grad}{\nabla}
\def\XXint#1#2#3{{\setbox0=\hbox{$#1{#2#3}{\int}$ }
\vcenter{\hbox{$#2#3$ }}\kern-.6\wd0}}
\newcommand{\Gk}{\kappa}
\newcommand{\Gth}{\theta}
\bmdefine\BGa{\alpha}
\bmdefine\BGb{\beta}
\bmdefine\BGd{\delta}
\bmdefine\BGe{\epsilon}
\bmdefine\BGve{\varepsilon}
\bmdefine\BGf{\phi}
\bmdefine\BGvf{\varphi}
\bmdefine\BGg{\gamma}
\bmdefine\BGc{\chi}
\bmdefine\BGi{\iota}
\bmdefine\BGk{\kappa}
\bmdefine\BGl{\lambda}
\bmdefine\BGn{\eta}
\bmdefine\BGm{\mu}
\bmdefine\BGv{\nu}
\bmdefine\BGp{\pi}
\bmdefine\BGth{\theta}
\bmdefine\BGvth{\vartheta}
\bmdefine\BGr{\rho}
\bmdefine\BGvr{\varrho}
\bmdefine\BGs{\sigma}
\bmdefine\BGvs{\varsigma}
\bmdefine\BGt{\tau}
\bmdefine\BGj{\tau}
\bmdefine\BGu{\upsilon}
\bmdefine\BGo{\omega}
\bmdefine\BGx{\xi}
\bmdefine\BGy{\psi}
\bmdefine\BGz{\zeta}
\bmdefine\BGD{\Delta}
\bmdefine\BGF{\Phi}
\bmdefine\BGG{\Gamma}
\bmdefine\BGL{\Lambda}
\bmdefine\BGP{\Pi}
\bmdefine\BGT{\Theta}
\bmdefine\BGS{\Sigma}
\bmdefine\BGU{\Upsilon}
\bmdefine\BGO{\Omega}
\bmdefine\BGX{\Xi}
\bmdefine\BGY{\Psi}
\bmdefine\BCA{{\mathcal A}}
\bmdefine\BCB{{\mathcal B}}
\bmdefine\BCC{{\mathcal C}}
\bmdefine\BCD{{\mathcal D}}
\bmdefine\BCE{{\mathcal E}}
\bmdefine\BCF{{\mathcal F}}
\bmdefine\BCG{{\mathcal G}}
\bmdefine\BCH{{\mathcal H}}
\bmdefine\BCI{{\mathcal I}}
\bmdefine\BCJ{{\mathcal J}}
\bmdefine\BCK{{\mathcal K}}
\bmdefine\BCL{{\mathcal L}}
\bmdefine\BCM{{\mathcal M}}
\bmdefine\BCN{{\mathcal N}}
\bmdefine\BCO{{\mathcal O}}
\bmdefine\BCP{{\mathcal P}}
\bmdefine\BCQ{{\mathcal Q}}
\bmdefine\BCR{{\mathcal R}}
\bmdefine\BCS{{\mathcal S}}
\bmdefine\BCT{{\mathcal T}}
\bmdefine\BCU{{\mathcal U}}
\bmdefine\BCV{{\mathcal V}}
\bmdefine\BCW{{\mathcal W}}
\bmdefine\BCX{{\mathcal X}}
\bmdefine\BCY{{\mathcal Y}}
\bmdefine\BCZ{{\mathcal Z}}
\bmdefine\Bzr{ 0}
\bmdefine\Ba{ a}
\bmdefine\Bb{ b}
\bmdefine\Bc{ c}
\bmdefine\Bd{ d}
\bmdefine\Be{ e}
\bmdefine\Bf{ f}
\bmdefine\Bg{ g}
\bmdefine\Bh{ h}
\bmdefine\Bi{ i}
\bmdefine\Bj{ j}
\bmdefine\Bk{ k}
\bmdefine\Bl{ l}
\bmdefine\Bm{ m}
\bmdefine\Bn{ n}
\bmdefine\Bo{ o}
\bmdefine\Bp{ p}
\bmdefine\Bq{ q}
\bmdefine\Br{ r}
\bmdefine\Bs{ s}
\bmdefine\Bt{ t}
\bmdefine\Bu{ u}
\bmdefine\Bv{ v}
\bmdefine\Bw{ w}
\bmdefine\Bx{ x}
\bmdefine\By{ y}
\bmdefine\Bz{ z}
\bmdefine\BA{ A}
\bmdefine\BB{ B}
\bmdefine\BC{ C}
\bmdefine\BD{ D}
\bmdefine\BE{ E}
\bmdefine\BF{ F}
\bmdefine\BG{ G}
\bmdefine\BH{ H}
\bmdefine\BI{ I}
\bmdefine\BJ{ J}
\bmdefine\BK{ K}
\bmdefine\BL{ L}
\bmdefine\BM{ M}
\bmdefine\BN{ N}
\bmdefine\BO{ O}
\bmdefine\BP{ P}
\bmdefine\BQ{ Q}
\bmdefine\BR{ R}
\bmdefine\BS{ S}
\bmdefine\BT{ T}
\bmdefine\BU{ U}
\bmdefine\BV{ V}
\bmdefine\BW{ W}
\bmdefine\BX{ X}
\bmdefine\BY{ Y}
\bmdefine\BZ{ Z}
\begin{document}
\title{Gaussian curvature as an identifier of shell rigidity}
\author{D. Harutyunyan}
\maketitle

\begin{abstract}
In the paper we deal with shells with non-zero Gaussian curvature. We derive sharp Korn's first (linear geometric rigidity estimate) and second inequalities on that kind of shells for zero or periodic Dirichlet, Neumann, and Robin type boundary conditions. We prove that if the Gaussian curvature is positive, then the optimal constant in the first Korn inequality scales like $h,$ and if the Gaussian curvature is negative, then the Korn constant scales like $h^{4/3},$ where $h$ is the thickness of the shell. These results have classical flavour in continuum mechanics, in particular shell theory. The Korn first inequalities are the linear version of the famous geometric rigidity estimate by Friesecke, James and M\"uller for plates [\ref{bib:Fri.Jam.Mue.2}] (where they show that the Korn constant in the nonlinear Korn's first inequality scales like $h^2$), extended to shells with nonzero curvature. We also recover the uniform Korn-Poincar\'e inequality proven for "boundary-less" shells by Lewicka and M\"uller in [\ref{bib:Lew.Mul.2}] in the setting of our problem. The new estimates can also be applied to find the scaling law for the critical buckling load of the shell under in-plane loads as well as to derive energy scaling laws in the pre-buckled regime. The exponents $1$ and $4/3$ in the present work appear for the first time in any sharp geometric rigidity estimate.
\end{abstract}

\section{Introduction}
\label{sec:1}
It is known, that the rigidity of a shell under compression is closely related to the optimal Korn's constant in the nonlinear (in many cases linear) first Korn's inequality-a geometric rigidity estimate for $W^{1,2}$ fields under the appropriate Robin type boundary conditions (or without them [\ref{bib:Fri.Jam.Mue.1}]) arising from the nature of the compression, e.g., [\ref{bib:Gra.Tru.},\ref{bib:Gra.Har.2}]. In their celebrated work, Friesecke, James and M\"uller [\ref{bib:Fri.Jam.Mue.1},\ref{bib:Fri.Jam.Mue.2}] derived a geometric rigidity estimate for plates, which gave rise to derivation of a hierarchy of plate theories for different scaling regimes of the elastic energy depending on the thickness $h$ of the plate [\ref{bib:Fri.Jam.Mue.2}]. This type of theories have been derived by Gamma-convergence and rely on $L^p$-compactness arguments and of course the underlying nonlinear Korn's inequality, which plays the main role. While the rigidity, and in particular buckling of plates has been understood almost completely [\ref{bib:Fri.Jam.Mue.2},\ref{bib:Dau.Sur.},\ref{bib:Con.Mag.}], the rigidity and buckling of shells is less well understood. To be more precise, to our best knowledge only low energy scalings like $E\sim h^\alpha,$ where $\alpha\geq 3,$ (which roughly speaking corresponds to bending, [\ref{bib:Fri.Jam.Mor.Mue.},\ref{bib:Lew.Mor.Pak.}]), or very high energy profiles $E\sim h$ (which corresponds to stretching, where the limiting energy completely eliminates bending and captures only stretching of the shell) have been studied [\ref{bib:LeD.Rao.1},\ref{bib:LeD.Rao.2}]. We also refer to the papers [\ref{bib:Fri.Jam.Mor.Mue.},\ref{bib:Lew.Mor.Pak.},\ref{bib:Hor.Lew.Pak.},\ref{bib:Hor.Vel.},\ref{bib:Lew.Li.},\ref{bib:Lew.Mah.Pak.},\ref{bib:Lew.Pak.},
\ref{bib:LeD.Rao.1},\ref{bib:LeD.Rao.2}] for some results on shell deformation and theories. An interested reader can also check the work of Ciarlet for linearized rod, plate, shell and thin structures with junctions theories [\ref{bib:Ciarlet1},\ref{bib:Ciarlet2},\ref{bib:Ciarlet3},\ref{bib:Ciarlet4},\ref{bib:Ciarlet5},\ref{bib:Ciarlet6},\ref{bib:Cia.Rab.}].
The gap in nonlinear shell theories is due to the lack of sharp\footnote{Although the same inequality (\ref{1.1}) has been proven in [\ref{bib:Fri.Jam.Mor.Mue.}], it is not sharp for shells with a uniformly nonzero principal curvature as the present work and [\ref{bib:Gra.Har.4}] show.} rigidity estimates for shells and $L^p$-compactness. In particular, for cylindrical shells the sequence of buckling modes has the only weak limit zero as understood by Grabovsky and Harutyunyan in [\ref{bib:Gra.Har.3}], thus one has to look for a Young measure as the limit to capture the oscillations developed by the deformation sequences. In a series of papers [\ref{bib:Gra.Har.1},\ref{bib:Gra.Har.3},\ref{bib:Gra.Har.2}], Grabovsky and Harutyunyan studied the buckling of cylindrical shells under axial compression, where it became clear that even for simple geometries the problem may be unexpectedly rich and complex. It has been understood by Grabovsky and Truskinovsky in [\ref{bib:Gra.Tru.}], see also [\ref{bib:Gra.Har.2}], that in the case of shells, or simply any thin structures, when there is enough rigidity then actually the linear Korn's first inequality can replace the nonlinear one in the buckling problem. Let us recall some of the known results to make things clear. The rigidity estimate of Friesecke, James and M\"uller mentioned above asserts the following: \textit{Assume $\omega\subset\mathbb R^2$ is a bounded Lipschitz domain and let $\Omega=\omega\times[0,h]$ be a plate with base $\omega$ and thickness $h>0.$ Then there exists a constant $C=C(\omega),$ such that for every vector field $\Bu\in H^1(\Omega)$ there exists a constant rotation $\BR\in SO(3)$, such that }
\begin{equation}
\label{1.1}
\|\nabla\Bu-\BR\|^2\leq\frac{C}{h^2}\int_\Omega\mathrm{dist}^2(\nabla\Bu(x),SO(3))dx.
\end{equation}
The linear Korn's first inequality introduced by Korn [\ref{bib:Korn.1},\ref{bib:Korn.2}] and proven by many different authors in
different settings [\ref{bib:Con.Dol.Mul.},\ref{bib:Friedrichs},\ref{bib:Horgan},\ref{bib:Kon.Ole.1},\ref{bib:Kon.Ole.2},\ref{bib:Pay.Wei.},\ref{bib:Kohn}] reads a follows: \textit{Assume $\Omega\subset\mathbb R^n$ is a bounded Lipschitz domain and the closed subspace $V\subset H^1(\Omega,\mathbb R^n)$ does not contain a rigid body motion $Ax+b,$ $(A^T=-A)$ with $A\neq 0.$ Then, there exists a constant $C=C(\Omega,V)$ such that for every vector field $\Bu\in V$ there holds: }
\begin{equation}
\label{1.2}
 C(\Omega,V)\|\nabla\Bu\|^2\leq \|e(\Bu)\|^2,
\end{equation}
where $e(\Bu)=\frac{1}{2}(\nabla\Bu+\nabla\Bu^T)$ is the symmetrized gradient (the strain in linear elasticity).
The subspace $V$ is identified by the boundary conditions satisfied by the displacement $\Bu$ coming from the nature of the problem under consideration. It is known that then the dependence of the optimal constant $C(\Omega,V),$ called Korn's constant, on the geometric parameters of the domain $\Omega$ is crucial in the study of the buckling problem [\ref{bib:Gra.Tru.},\ref{bib:Gra.Har.3}], namely the formula (\ref{1.4}) below for the buckling load has been proven in [\ref{bib:Gra.Har.3}]. Recall, that if the body $\Omega$ is subject to dead load $\Bt(x),$ then the total energy of the deformation $\By(x)=\Bx+\Bu(x)$ is given by
$$E(\By)=\int_{\Omega}W(\nabla \By(x))dx-\int_{\partial\Omega}\By(s)\cdot \Bt(s)ds,$$
where $W$ is the elastic energy density of the body $\Omega.$ In their theory of buckling of thin structures, Grabovsky and Truskinovsky [\ref{bib:Gra.Tru.}] assume that the thin domain $\Omega=\Omega_h$ is subject to the dead load
$\Bt_\lambda(x)=\lambda \Bt(x),$ where $\lambda>0$ is the magnitude of the load. Then they introduce the Rayleigh quotient
\begin{equation}
\label{1.3}
R(h,\mathbf{\phi})=\frac{\int_{\Omega_{h}}(L_{0}e(\mathbf{\phi}),e(\mathbf{\phi}))dx}
{\int_{\Omega_{h}}(\sigma_{h},\nabla\mathbf{\phi}^{T}\nabla\mathbf{\phi})dx},
\end{equation}
which was studied further by Grabovsky and Harutyunyan in [\ref{bib:Gra.Har.3}].
Here, $L_0=W_{FF}(I)$ is the linear elasticity matrix, $\sigma_h$ is the Piola-Kirchhoff stress tensor and
$e(\phi)=\frac{1}{2}\left(\nabla\mathbf{\phi}+\nabla\mathbf{\phi}^T\right)$ is the linear elastic strain.
Define furthermore the constitutively linearized buckling load [\ref{bib:Gra.Har.3}] as
\begin{equation}
\label{1.4}
\lambda(h)=\inf_{\mathbf{\phi}\in V_h}R(h,\mathbf{\phi}),
\end{equation}
where here $V_h$ is the closed subspace of $H^1(\Omega_h,\mathbb R^3)$ identified by the boundary conditions fulfilled by the displacement $\phi.$
Then the following theorem has been proven in [\ref{bib:Gra.Har.3}]:
\begin{theorem}[The critical load]
Assume that the quantity $\lambda(h)$ defined in (\ref{1.4})  satisfies
$\lambda(h)>0$ for all sufficiently small $h$ and
\begin{equation}
 \label{1.5}
\lim_{h\to 0}\frac{{\lambda}(h)^{2}}{K(\Omega_h,V_{h})}=0,
\end{equation}
where $K(\Omega_h,V_h)$ is the Korn's constant of the domain $\Omega_h$ associated to the subspace $V_h.$
Then $\lambda(h)$ is the critical buckling load and the variational problem (\ref{1.3})
captures the buckling modes (deformations) too (see [\ref{bib:Gra.Har.3}] for a precise definition of buckling modes).
\end{theorem}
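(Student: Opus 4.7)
The plan is to perform a second-order expansion of the nonlinear energy $E(\By) - \lambda\int_{\partial\Omega_h}\By\cdot\Bt\,ds$ around the trivial branch $\By=\Bx$, show that the quadratic part is controlled exactly by $R(h,\cdot)-\lambda$, and absorb every higher-order correction using the linear Korn inequality (\ref{1.2}) together with the gap hypothesis (\ref{1.5}).

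First, setting $\By_t=\Bx+t\phi$ for $\phi\in V_h$ and Taylor expanding $W(I+t\nabla\phi)$ around $F=I$, one obtains, after invoking frame-indifference (which forces the quadratic form to reduce to one in the symmetric strain) and the identification of $\sigma_h$ with the first-order pre-stress contribution, an expansion of the form
$$E(\By_t)-E(\Bx)=\frac{t^2}{2}\left[\int_{\Omega_h}(L_0 e(\phi),e(\phi))\,dx - \lambda\int_{\Omega_h}(\sigma_h,\nabla\phi^T\nabla\phi)\,dx\right]+r(t,\phi),$$
where $r(t,\phi)$ collects all contributions of the form $t^k\int W^{(k)}(I)[\nabla\phi]^k$ for $k\ge 3$. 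Dividing the bracketed quantity by the denominator of (\ref{1.3}) recovers $R(h,\phi)-\lambda$, so the quadratic piece is nonnegative for every admissible $\phi$ if and only if $\lambda\le\lambda(h)$.

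The decisive step is to bound $r(t,\phi)$ uniformly on the admissible set. By the polynomial growth of $W$ near $I$ together with standard interpolation/truncation on the thin domain $\Omega_h$, one gets $|r(t,\phi)|\le C t^3\|\nabla\phi\|_{L^2(\Omega_h)}^3+C t^4\|\nabla\phi\|_{L^2(\Omega_h)}^4$. Applying (\ref{1.2}) converts each power of $\|\nabla\phi\|$ into $\|e(\phi)\|$ at the price of a factor $K(\Omega_h,V_h)^{-1/2}$. Normalizing by $\|e(\phi)\|=1$ and optimizing in $t$, the hypothesis (\ref{1.5}), rewritten as $\lambda(h)/K(\Omega_h,V_h)^{1/2}\to 0$, is precisely the threshold that forces the cubic remainder to be of strictly lower order than the quadratic gap $|\lambda(h)-\lambda|$. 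Consequently, for $\lambda<\lambda(h)$ the trivial branch is a strict local minimizer, whereas for $\lambda>\lambda(h)$ a near-minimizer $\phi_h$ of $R(h,\cdot)$ in $V_h$ produces a genuine direction of energy decrease; this identifies $\lambda(h)$ as the critical buckling load and exhibits the (suitably rescaled) $\phi_h$ as buckling modes via weak or Young-measure limits.

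The main obstacle is the uniform $h$-dependent control of the cubic residual by the degenerating constant $K(\Omega_h,V_h)$: one must track precisely how $\|\nabla\phi\|_{L^p(\Omega_h)}$ scales in terms of $\|e(\phi)\|_{L^2(\Omega_h)}$ via Gagliardo--Nirenberg-type inequalities compatible with the subspace $V_h$, which do not follow from (\ref{1.2}) alone. It is exactly here that a sharp quantitative Korn estimate of the type derived in this paper is indispensable, and the hypothesis (\ref{1.5}) is what makes the estimates close up.
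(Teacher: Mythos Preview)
The paper does not prove this theorem. It is quoted verbatim from [\ref{bib:Gra.Har.3}] as background motivation for why sharp Korn constants matter; the sentence immediately preceding the statement reads ``Then the following theorem has been proven in [\ref{bib:Gra.Har.3}].'' There is therefore no proof in the present paper to compare your attempt against.

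That said, your sketch is broadly in the spirit of the Grabovsky--Truskinovsky / Grabovsky--Harutyunyan program: Taylor-expand the energy about the trivial branch, identify the second variation with the numerator of $R(h,\phi)$ minus $\lambda$ times the denominator, and use Korn's inequality to dominate the remainder. You correctly isolate the real difficulty, namely that the cubic and higher remainders involve $\|\nabla\phi\|_{L^p}$ for $p>2$, which the $L^2$ Korn inequality (\ref{1.2}) alone does not control; in [\ref{bib:Gra.Har.3}] this is handled through a carefully set up notion of ``near-flip'' buckling and a specific asymptotic regime for the trivial branch, not by a naive Gagliardo--Nirenberg step. Your normalization $\|e(\phi)\|=1$ also glosses over the fact that the relevant small parameter is the amplitude of the bifurcating branch rather than $t$ alone, and the precise definition of ``buckling mode'' in [\ref{bib:Gra.Har.3}] is needed to make the last sentence rigorous. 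So the outline is reasonable as a heuristic, but several of the steps you flag as routine are exactly where the work in [\ref{bib:Gra.Har.3}] lies; consult that reference for the actual argument.
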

As long as the task is to solve the variational problem (\ref{1.4}), one clearly has to calculate the Korn's constant $C(\Omega_h,V_h)$
as the numerator and the denominator in (\ref{1.3}) are quadratic forms in the strain $e(\Bu)$ and the gradient $\nabla\Bu$ respectively.
All the Korn's constants calculated prior to the work of Grabovsky and Harutyunyan [\ref{bib:Gra.Har.1}], (where the authors calculate Korn's constant for cylindrical shells), scale either like $h^2$ or $O(1).$ The one for cylindrical shells calculated in [\ref{bib:Gra.Har.1}] scales like $h^{3/2},$ which gave rise to the following very important question: \textit{What geometric quantities make shells more rigid than plates and how can one compare the rigidity of two different shells? It turns out, that the answer to the above question is encoded in the principal and Gaussian curvatures of the shell: the bigger the curvatures are the more rigid the shell is as will be clear from the analysis in this work.} Mimicking cylindrical and conical shells, Grabovsky and Harutyunyan derived sharp Korn's first inequalities for shells with one principal curvature vanishing and the other one having a constant sign in [\ref{bib:Gra.Har.4}], where they prove that the Korn constant again scales like $h^{3/2}$ as for regular circular cylindrical shells. In this work we derive sharp Korn's first inequalities for shells of positive or negative Gaussian curvature. It turns out, that for negative curvature the scaling of Korn's constant is $h^{4/3}$ and for positive curvature it is $h$, i.e., the zero and positive Gaussian curvature shells are the least and  most rigid (under compression) ones respectively. All three exponents $3/2,$ $4/3$ and $1$ are completely new in Korn's inequalities\footnote{The exponent $3/2$ appeared first in the work of Grabovsky and Harutyunyan [\ref{bib:Gra.Har.1}]}.
Let us mention, that a hint for the exponents $4/3$ and $1$ comes from the book of Tovstik and Smirnov [\ref{bib:Tov.Smi.}], where the authors
construct different Ans\"atze for shells, without proving any Ansatz-free lower bounds. Korn's first inequalities mentioned above and proven in this work are the linear analogues of Friesecke, James and M\"uller rigidity estimate (\ref{1.1}). This work opens up new ideas in the shell theory and also makes it clear that when dealing with a shell, it may be most convenient to work in the local coordinates where the principal curvatures of the shell pop up in the gradient structure of the vector field $\Bu\colon\Omega\to\mathbb R^3$ explicitly. Although it is hard to predict, we believe the analogous nonlinear estimates (with or without boundary conditions) will have the same scaling of the constant $C$ in terms of the shell thickness. That new nonlinear rigidity estimates will naturally give rise to shell theories being derived from 3-dimensional elasticity by $\Gamma$-convergence like [\ref{bib:Fri.Jam.Mue.2},\ref{bib:Fri.Jam.Mor.Mue.},\ref{bib:Lew.Mor.Pak.},\ref{bib:Lew.Pak.}]. Let us now comment on the proof of our main results and also the relevant sharp rigidity estimates proven before. First of all it is worth mentioning that the only available constants (in terms of the thickness $h$) in the literature are $K(\Omega)=Ch^2$ and $K(\Omega)=C,$ where $C$ is a constant and $K(\Omega)$ is the Korn constant of the domain $\Omega$ (with or without boundary conditions), the constant in the rigidity estimate or in Korn's first inequality. The lower bound $K\geq Ch^2$ proven for plates by Friesecke, James and M\"uller in [\ref{bib:Fri.Jam.Mue.2}] is universal\footnote{It is satisfied even for rods, which seem to be less rigid.}, in the sense that the same authors and Mora prove it for shells implicitly in [\ref{bib:Fri.Jam.Mor.Mue.}], however, as mentioned before, it is not sharp for general shells. The main tool in their proof is the geometric rigidity estimate of Friesecke, James and M\"uller proven in [\ref{bib:Fri.Jam.Mue.1}], while the main technique is the localization technique, i.e., one divides the shell in many little cubes of size $h,$ proves a local estimate on each of them, and sums them up, thus the curvature of the shell is not taken into account and does not play a role. This technique gives the universal lower bound $K\geq Ch^2$ and evidently can not give better such as $Ch^\alpha,$ where $\alpha<2$ (also because of the simple reason of neglecting the curvature). Another interesting sharp Korn's first inequality is proven by Lewicka and M\"uller [\ref{bib:Lew.Mul.2}], where the authors prove a uniform Korn-Poincar\'e estimate $\|\Bu\|_{H^1(\Omega)}\leq C\|e(\Bu)\|_{L^2(\Omega)}$ for shells $\Omega,$ that are made around a smooth boundary-less surface $S\subset\mathbb R^3,$ and for tangential vector fields $\Bu\in H^1(\Omega)$ with $\Bu\cdot\Bn=0$ on $\partial\Omega.$ An interesting aspect of the present work is that we also recover the Lewicka-M\"uller uniform Korn-Poincar\'e inequality, of course in the setting of uniformly positive or negative curvature\footnote{Of course there is no boundary-less surface in $\mathbb R^3$ with uniformly negative Gaussian curvature. The statement for negative Gaussian curvature shells must be understood as for such shells with boundary (in the in-plane directions) and $H^1$ vector fields defined on them that satisfy zero or periodic boundary conditions in each of the principal directions (if the shell parametrization allows for talking about periodicity) on the thin faces of the shell. Roughly speaking, having no boundary is the same as having periodic boundary conditions.} (see Theorem~\ref{th:3.1} and Remark~\ref{rem:3.2}), which seems to replace their "contraction type Schwartz inequality" hypothesis (see [\ref{bib:Lew.Mul.2}, (2.4) in Theorem~2.1]), that seems to be curvature and thickness-change related. Also, when proving a Korn inequality for a shell, then bounding the normal "out-of-plane" component is typically the most complex one, while in the presence of tangential boundary conditions it becomes trivial by the Poincar\'e inequality in the normal direction. Regarding our strategy of the proof, it turns out that for both positive and negative Gaussian curvatures, one can prove a Korn-Poincar\'e inequality that controls the norm of the displacement components in the principal "in plane" directions by the norm of the symmetrized gradient $e(\Bu).$ Moreover, it turns out that for positive curvatures, one can bound the normal component of the displacement $\Bu$ by the norm of the symmetrized gradient $e(\Bu)$ as well. These are not possible for zero Gaussian curvature as the Ans\"atze in [\ref{bib:Gra.Har.1},\ref{bib:Gra.Har.4}] show. Then utilizing these bounds we simplify the problem by "throwing away" the parts of the gradient $\nabla\Bu$ that are of the order of the norm $e(\Bu).$ For the remaining part of the gradient we then prove a sharp first and a half Korn inequality (a Korn interpolation inequality introduced first in [\ref{bib:Gra.Har.1}]) with a constant scaling like $h.$ Finally, we combine the Korn-Poincar\'e inequality with the first-and-a-half Korn inequality to derive the sharp Korn's first inequality. As already mentioned, the Ansatz for negative Gaussian curvature is due to Tovstik and Smirnov and can be found in the book [\ref{bib:Tov.Smi.}], while the Ansatz for positive Gaussian curvature is Kirchhoff-like and is given by us. We mention in conclusion, that our analysis goes through for shells with nonconstant thickness as demonstrated in [\ref{bib:Harutyunyan.1},\ref{bib:Harutyunyan.2}].

\section{Preliminaries}
\setcounter{equation}{0}
\label{sec:2}

In this section we introduce notation and some definitions. In what follows we assume that the mid-surface $S$
of the shell under consideration is of class $C^3$ up to the boundary. Denote by $z$ and $\Gth$ the coordinates on the mid-surface
of the shell, such that $z=$constant and $\Gth=$constant are the principal lines. Here,
$\Gth$ will denote the circumferential and $z$--the longitudinal coordinates for
cylindrically shaped shells and in particular, in the case of a straight circular cylinder, $\Gth$ and $z$ are the standard cylindrical coordinates. Assume the mid-surface is given by the parametrization $\Br=\Br(\Gth,z).$ Then, introducing the normal coordinate $t$,
we obtain the set of orthogonal curvilinear coordinates $(t,\Gth,z)$ on the entire shell given by
\[
\BR(t,\Gth,z)=\Br(z,\Gth)+t\Bn(z,\Gth),
\]
where $\Bn$ is the \emph{outward} unit normal. In this paper we will study shells of constant thickness $h$ around $S,$ i.e., the domain
$$\Omega=\left\{S+t\Bn\ : \ t\in\left[-\frac{h}{2},\frac{h}{2}\right]\right\}.$$
As mentioned in Remark~\ref{rem:3.0}, the present results hold for any surface $S$ with nonzero Gaussian curvature and for displacements $\Bu$ satisfying some boundary conditions (depending on the geometry of the shell) on the thin faces of the shell $\Omega.$ However, in order to simplify the presentation of the analysis, we will assume in the sequel that the mid-surface $S$ is given by $S=\{(\Gth,z) \ : \ \Gth\in[0,\omega], \ z\in[1,1+l]\}$, and thus the shell $\Omega$ is given by
 \begin{equation}
  \label{2.1}
  \Omega=\left\{\BR(t,\Gth,z): t\in\left[-\frac{h}{2},\frac{h}{2}\right],\ \Gth\in[0,\omega], \ z\in[1,1+l]\right\},
\end{equation}
i.e., we assume that the shell is cut along the principal directions of the mid-surface. Denote next
\[
A_{z}=\left|\frac{\partial \Br}{\partial z}\right|,\qquad A_{\Gth}=\left|\frac{\partial \Br}{\partial\Gth}\right|,
\]
the two nonzero components of the metric tensor of the mid-surface and the two principal curvatures by $\Gk_{z}$ and $\Gk_{\Gth}$.
The signs of $\Gk_{z}$ and $\Gk_{\Gth}$ are choosen such that $\kappa_{z}$ and $\kappa_{\Gth}$ are positive for a sphere.
The four functions $A_{\Gth}$, $A_{z}$, $\Gk_{\Gth}$, and $\Gk_{z}$ satisfy the Codazzi-Gauss relations (see
e.g. [\ref{bib:Lee},\ref{bib:Tov.Smi.}]),
\begin{align}
  \label{2.2}
  &\dif{\Gk_{z}}{\Gth}=(\Gk_{\Gth}-\Gk_{z})\frac{A_{ z,\Gth}}{A_{ z}},\qquad
\dif{\Gk_{\Gth}}{ z}=(\Gk_{z}-\Gk_{\Gth})\frac{A_{\Gth, z}}{A_{\Gth}},\\ \nonumber
  &\dif{}{ z}\left(\frac{A_{\Gth, z}}{A_{ z}}\right)+
\dif{}{\Gth}\left(\frac{A_{ z,\Gth}}{A_{\Gth}}\right)=-A_{ z}A_{\Gth}\Gk_{z}\Gk_{\Gth},
\end{align}
and define the Levi-Civita connection on the mid-surface of the shell via the following derivation formulas
\[
\Grad_{\Be_{z}}\Be_{z}=-\nth{A_{ z}A_{\Gth}}\dif{A_{ z}}{\Gth}\Be_{\Gth}-\Gk_{z}\Bn,\qquad
\Grad_{\Be_{z}}\Be_{\Gth}=\nth{A_{ z}A_{\Gth}}\dif{A_{ z}}{\Gth}\Be_{z},\qquad
\Grad_{\Be_{z}}\Bn=\Gk_{z}\Be_{z},
\]
\[
\Grad_{\Be_{\Gth}}\Be_{\Gth}=-\nth{A_{ z}A_{\Gth}}\dif{A_{\Gth}}{ z}\Be_{z}-\Gk_{\Gth}\Bn,\qquad
\Grad_{\Be_{\Gth}}\Be_{z}=\nth{A_{ z}A_{\Gth}}\dif{A_{\Gth}}{ z}\Be_{\Gth},\qquad
\Grad_{\Be_{\Gth}}\Bn=\Gk_{\Gth}\Be_{\Gth}.
\]
Our convention is that inside the gradient matrix we will use $f_{,\alpha}$ for the partial derivative $\frac{\partial}{\partial\alpha}.$
Given  a weakly differentiable vector field $\Bu\colon\Omega\to\mathbb R^3$, using the above formulas for the partial derivatives,
we can compute the components of $\nabla\Bu$ in the orthonormal basis $\Be_{t}$, $\Be_{\Gth}$, $\Be_{z}$ to get
\begin{equation}
\label{2.3}
\nabla\Bu=
\begin{bmatrix}
  u_{t,t} & \dfrac{u_{t,\Gth}-A_{\Gth}\Gk_{\Gth}u_{\Gth}}{A_{\Gth}(1+t\Gk_{\Gth})} &
\dfrac{u_{t,z}-A_{z}\Gk_{z}u_{z}}{A_{z}(1+t\Gk_{z})}\\[3ex]
u_{\Gth,t}  &
\dfrac{A_{z}u_{\Gth,\Gth}+A_{z}A_{\Gth}\Gk_{\Gth}u_{t}+A_{\Gth,z}u_{z}}{A_{z}A_{\Gth}(1+t\Gk_{\Gth})} &
\dfrac{A_{\Gth}u_{\Gth,z}-A_{z,\Gth}u_{z}}{A_{z}A_{\Gth}(1+t\Gk_{z})}\\[3ex]
u_{ z,t}  & \dfrac{A_{z}u_{z,\Gth}-A_{\Gth,z}u_{\Gth}}{A_{z}A_{\Gth}(1+t\Gk_{\Gth})} &
\dfrac{A_{\Gth}u_{z,z}+A_{z}A_{\Gth}\Gk_{z}u_{t}+A_{z,\Gth}u_{\Gth}}{A_{z}A_{\Gth}(1+t\Gk_{z})}
\end{bmatrix}.
\end{equation}
The gradient restricted to the mid-surface or the so called simplified gradient denoted by $F$ is obtained from (\ref{2.3}) putting $t=0,$
thus it has the form
\begin{equation}
  \label{2.4}
\BF=
\begin{bmatrix}
  u_{t,t} & \dfrac{u_{t,\Gth}-A_{\Gth}\Gk_{\Gth}u_{\Gth}}{A_{\Gth}} &
\dfrac{u_{t,z}-A_{z}\Gk_{z}u_{z}}{A_{z}}\\[3ex]
u_{\Gth,t}  &
\dfrac{A_{z}u_{\Gth,\Gth}+A_{z}A_{\Gth}\Gk_{\Gth}u_{t}+A_{\Gth,z}u_{z}}{A_{z}A_{\Gth}} &
\dfrac{A_{\Gth}u_{\Gth,z}-A_{z,\Gth}u_{z}}{A_{z}A_{\Gth}}\\[3ex]
u_{ z,t}  & \dfrac{A_{z}u_{z,\Gth}-A_{\Gth,z}u_{\Gth}}{A_{z}A_{\Gth}} &
\dfrac{A_{\Gth}u_{z,z}+A_{z}A_{\Gth}\Gk_{z}u_{t}+A_{z,\Gth}u_{\Gth}}{A_{z}A_{\Gth}}
\end{bmatrix}.
\end{equation}
The simplified gradient $\BF$ will be very useful in our analysis as it has the following features: it is simpler than the
usual gradient $\nabla\Bu$ and it is an approximation of $\nabla\Bu$ to the order of $h$ due to the smallness of the variable $t.$
In this paper all norms $\|\cdot\|$ are $L^{2}$ norms and the Cartesian $L^2$ inner product of two functions
$f,g\colon\Omega\to\mathbb R$ will be given by
\[
(f,g)_{\Omega}=\int_{\Omega}A_zA_\Gth f(t,\Gth,z)g(t,\Gth,z)d\Gth dzdt,
\]
which gives rise to the norm $\|f\|_{L^2(\Omega)}$.
Denote by $\mathrm S$ the mid-surface of the shell.
The following conditions $(i)$ and $(ii)$ follow from the fact that the surface $S$ is $C^3,$ while conditions $(iii)$ and $(iv)$ will be assumed throughout this work.
\begin{itemize}
\item[(i)] The gradients $\nabla A_\Gth$ and $\nabla A_z$ are bounded on $\mathrm S,$ i.e.,
$|\nabla A_\Gth|, |\nabla A_z|\leq B$ for some $B>0.$
\item[(ii)] The gradients of the curvatures $\kappa_\Gth$ and $\kappa_z$ are bounded on $\mathrm S,$ i.e.,  $|\nabla \kappa_\Gth|, |\nabla \kappa_z|\leq K_1.$
\item[(iii)] The functions $A_\Gth$ and $A_z$ are uniformly positive and bounded on $\mathrm S,$ i.e., $0<a\leq A_\Gth,A_z\leq A<\infty$
for some $a$ and $A.$ This condition means that the mid-surface is non-degenerate.
\item[(iv)] The curvatures $\kappa_\Gth$ and $\kappa_z$ are bounded on $\mathrm S$ as follows: $0<k\leq |\kappa_\Gth|, |\kappa_z|\leq K.$
\end{itemize}
To make it easier for reference, we combine all the inequalities in one equation:

\begin{align}
\label{2.5}
&0<a\leq A_\Gth,A_z\leq A,\quad |\nabla A_\Gth|, |\nabla A_z|\leq B,\\ \nonumber
&0<k\leq |\kappa_\Gth|, |\kappa_z|\leq K,\\ \nonumber
&|\nabla \kappa_\Gth|, |\nabla \kappa_z|\leq K_1.
\end{align}

\section{Main results}
\label{sec:3}
\setcounter{equation}{0}
Before formulating the results we have to set the boundary conditions the displacement $\Bu\colon\Omega\to\mathbb R^3$ is going to satisfy. We mimic the situation when the shell is a vase-like shell, i.e., it has no boundary in the $\Gth$ principal direction and is cut at the principal lines in the $z$ principal direction. As mentioned in the introduction, when the load is applied to the top ($z=1+l$) of the shell and the bottom ($z=1$) is fixed, then zero boundary conditions must be imposed on some components of the displacement $\Bu$ at $z=1+l$ and $z=1.$ In the $\Gth$ direction there are no boundary conditions imposed explicitly, but let us mention, that actually there are some and the condition is implicit, namely it is periodic Dirichlet boundary conditions on the faces $\Gth=0$ and $\Gth=\omega.$ To keep things more general, we do not assume in this paper that the shell has no boundary in the $\Gth$ direction, but we rather impose boundary conditions on the $\Gth=0$ and $\Gth=\omega$
thin faces of the shell too, namely we consider the following subspace of $H^1(\Omega):$
\begin{align}
\label{3.1}
V&=\{\BGf\in W^{1,2}(\Omega;\mathbb R^3):\phi_{\Gth}(t,\Gth,1)=\phi_{z}(t,\Gth,1)=
\phi_{\Gth}(t,\Gth,1+l)=\phi_{z}(t,\Gth,1+l)=0\}\\ \nonumber
&\cap\{\BGf\in W^{1,2}(\Omega;\mathbb R^3): \phi_\Gth,\phi_z \ \text{are} \ \omega-\text{periodic in} \ \Gth\}.
\end{align}

\begin{remark}
\label{rem:3.0}
It is also worth mentioning, that the analysis in this paper works for any shells (not necessarily cut in the principal directions) with
a wide variety of boundary conditions such as Robin ones too, which can be checked easily. For instance instead of (\ref{3.1}) one can assume that the conditions $\phi_{\Gth}(t,\Gth,z)=\phi_{z}(t,\Gth,z)=0$ for all $t\in [-h/2,h/2]$ hold on the thin faces of the shell, i.e., Dirichlet boundary conditions on the in-plane components of $\BGf.$
\end{remark}
Next, let us mention that in what follows the constants $C,c,C_i,c_i>0$ will depend only on the quantities $K,$ $K_1,$ $k,$ $A,$ $a$ and $B,$ i.e., the shell mid-surface parameters. As already mentioned in the abstract, we will work in the case when the
Gaussian curvature $K_G=\kappa_\Gth \kappa_z$ has a constant sign. Our main results are sharp Korn's first and second inequalities providing Ansatz free lower bounds for displacements $\Bu$ satisfying the boundary conditions (\ref{3.1}). We remark, that the Korn-Poincar\'e inequality proven in Section~\ref{sec:4} as an auxiliary lemma can play a crucial role in the derivation of the analogous nonlinear geometric rigidity estimates in order to introduce "artificial" boundary conditions. This being said, Lemma~\ref{lem:4.1} can be viewed as another central result of this work.
\begin{theorem}[Korn's second inequality]
\label{th:3.1}
Assume that the Gaussian curvature $K_G$ has a constant sign on the mid-surface $S$, i.e., the principal curvatures $\Gk_\Gth$ and $\Gk_z$ never vanish on $S.$ Then there exists a constat $C,$ such that Korn's second inequality holds:
\begin{equation}
  \label{3.2}
\|\nabla\Bu\|^2\leq C\left(\frac{\|e(\Bu)\|\cdot \|u_t\|}{h}+\|u_t\|^2+\|e(\Bu)\|^2\right),
\end{equation}
for all $h\in(0,1)$ and $\Bu\in V.$
\end{theorem}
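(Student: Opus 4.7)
The plan is to bound the skew-symmetric part $\omega(\Bu) = \frac{1}{2}(\nabla\Bu - (\nabla\Bu)^T)$ of the gradient, since in an orthonormal frame $\|\nabla\Bu\|^2 = \|e(\Bu)\|^2 + \|\omega(\Bu)\|^2$; the theorem then reduces to estimating the three independent components of $\omega(\Bu)$ in the frame $(\Be_t,\Be_\Gth,\Be_z)$---the in-plane rotation $\omega_{\Gth z}$ about $\Bn$ and the two out-of-plane rotations $\omega_{t\Gth}$ and $\omega_{tz}$, all read directly from (\ref{2.3}). The non-vanishing-curvature hypothesis enters decisively because the coefficient of $u_t$ inside the diagonal strain entries $e_{\Gth\Gth}$ and $e_{zz}$ is exactly the corresponding principal curvature, which (\ref{2.5}) bounds below by $k>0$.

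First, I would combine $e_{tt} = u_{t,t}$ with the explicit expressions for $e_{\Gth\Gth}$ and $e_{zz}$ read from (\ref{2.4}) (up to the factors $(1+t\Gk)^{-1}$, which are uniformly close to $1$ for small $h$) to bound pointwise the three diagonal entries of $\nabla\Bu$ by linear combinations of strain entries, $u_t$, $u_\Gth$, and $u_z$. The $L^2$ norms of the tangential components $u_\Gth$ and $u_z$ are then absorbed into $\|e(\Bu)\|$ through the Korn--Poincar\'e estimate of Lemma~\ref{lem:4.1}, exploiting the Dirichlet conditions at $z=1,1+l$ and the $\Gth$-periodicity built into $V$. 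After this step, the diagonal of $\nabla\Bu$ contributes only $\|e(\Bu)\|^2$ and $\|u_t\|^2$ to the right-hand side, matching two of the three terms in (\ref{3.2}).

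For the skew components, the in-plane entry $\omega_{\Gth z}$ is essentially a tangential curl: squaring and integrating by parts in $\Gth$ and $z$---the boundary terms vanishing by periodicity and by the Dirichlet conditions in $V$---converts $\|\omega_{\Gth z}\|^2$ into a bilinear form in symmetric strain combinations plus lower-order terms that are rearranged using the Codazzi--Gauss identities (\ref{2.2}). The out-of-plane entries $\omega_{t\Gth}$ and $\omega_{tz}$ are the delicate ones, since they involve the $t$-derivative of an in-plane component while no boundary condition is available on the thin faces $t = \pm h/2$. My plan for these is to integrate by parts in $t$ to move the $t$-derivative off of $u_\Gth$ (respectively $u_z$), commute the resulting mixed derivative so that $u_{t,t}$ may be replaced by $e_{tt}$, integrate by parts once more in the corresponding tangential direction, and finally control the remaining trace terms on $t = \pm h/2$ by a trace estimate across the slab of width $h$; this last step is precisely where the factor $1/h$ multiplying the mixed cross-term $\|e(\Bu)\|\,\|u_t\|$ is produced.

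The principal obstacle is precisely that last step---producing an $h$-uniform bound on $\|u_{\Gth,t}\|$ and $\|u_{z,t}\|$ in the absence of boundary data on $t = \pm h/2$. The non-vanishing Gaussian curvature hypothesis is what keeps the bookkeeping closed: after each commutation of derivatives and each substitution via the diagonal strain identities, every residual lower-order contribution can be re-expressed as $u_t$ times a bounded coefficient (rather than as $u_\Gth$ or $u_z$, neither of which may appear on the right of (\ref{3.2})). Collecting the diagonal estimate and the three skew estimates, and applying Cauchy--Schwarz to group cross terms of the form $h^{-1}\!\int u_t \cdot e(\Bu)$, then yields the stated inequality.
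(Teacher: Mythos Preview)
Your treatment of the diagonal entries and of the in-plane rotation $\omega_{\Gth z}$ is essentially correct and matches the paper's ``block~23'' computation: one writes $(F_{23},F_{32})=\int u_{\Gth,z}u_{z,\Gth}=\int u_{\Gth,\Gth}u_{z,z}$ via the periodic and Dirichlet conditions in~$V$, and then substitutes $u_{\Gth,\Gth}=A_\Gth(e_{\Gth\Gth}-\Gk_\Gth u_t)+\text{l.o.t.}$ and similarly for $u_{z,z}$, so that only $\|e\|^2$ and $\|u_t\|^2$ survive.  Lemma~\ref{lem:4.1} enters exactly as you say.

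The genuine gap is in your plan for the out-of-plane pieces $\omega_{t\Gth}$ and $\omega_{tz}$.  Using the algebraic identity $\omega_{t\Gth}^2=e_{t\Gth}^2-u_{\Gth,t}\cdot A_\Gth^{-1}u_{t,\Gth}$, the problem reduces to the cross term $\int_\Omega u_{\Gth,t}\,u_{t,\Gth}$.  Any integration-by-parts scheme that moves the $t$-derivative off $u_\Gth$ and then commutes to produce $u_{t,t}=e_{tt}$ leaves a boundary integral on $t=\pm h/2$ of the form $\int_S u_t\,u_{\Gth,\Gth}$ (equivalently $\int_S u_t\,A_\Gth e_{\Gth\Gth}$ after substituting the diagonal strain).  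Your proposed ``trace estimate across the slab of width~$h$'' would then need a bound on $\|e_{\Gth\Gth}(\pm h/2,\cdot)\|_{L^2(S)}$, which in turn requires $\|\partial_t e_{\Gth\Gth}\|_{L^2(\Omega)}$ --- a second derivative of $\Bu$, not available for $H^1$ fields and not controllable by the right-hand side of~(\ref{3.2}).  No rearrangement of the order of the two integrations by parts avoids this: the divergence identity $u_{\Gth,t}u_{t,\Gth}-u_{\Gth,\Gth}u_{t,t}=(u_\Gth u_{t,\Gth})_{,t}-(u_\Gth u_{t,t})_{,\Gth}$ shows the same boundary contribution is intrinsic.

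The paper sidesteps this obstruction by an entirely different device.  After passing to the further simplified matrix $\BF^\ast$ of~(\ref{5.1}) (legitimate by~(\ref{5.2})), it freezes~$\Gth$ (resp.~$z$) and applies a two-dimensional Korn-type interpolation inequality on the thin rectangle $(-h/2,h/2)\times(1,1+l)$ for the pair $(u_t,A_zu_z)$ with a perturbed gradient~(\ref{5.6.1}).  That 2D inequality (Lemma~\ref{lem:5.2}) is proved by splitting $u_t$ into its harmonic part $w$ plus a remainder: the remainder satisfies $\|\nabla(u_t-w)\|\le C\|e(\BT)\|$ from the elliptic equation~(\ref{5.6.3}), while the harmonic part obeys the sharp gradient-separation estimate $\|w_{,y}\|^2\le \tfrac{2\sqrt3}{h}\|w\|\,\|w_{,x}\|+\|w_{,x}\|^2$ of Lemma~\ref{lem:5.2.1} (imported from~[\ref{bib:Gra.Har.1}]).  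It is this harmonic estimate --- not any trace inequality --- that produces the cross term $h^{-1}\|u_t\|\,\|e(\Bu)\|$ with the correct power of~$h$.  Your plan is missing an ingredient of this type.
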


A useful remark concerning uniform Korn-Poincar\'e inequalities is as follows:
\begin{remark}[Uniform Korn-Poincar\'e inequality]
\label{rem:3.2}
Under the tangential boundary conditions $\Bu\cdot \Bn=0$ on one of the faces $S^{\pm}=\Br\pm \frac{h}{2}\Bn,$ the uniform Korn-Poincare inequality holds:
\begin{equation}
  \label{3.3}
\|\Bu\|_{H^1(\Omega)}\leq C\|e(\Bu)\|_{L^2(\Omega)},
\end{equation}
for all $h\in(0,1)$ and $\Bu\in V.$
\end{remark}

\begin{proof}
The proof is a direct consequence of (\ref{3.2}), the Korn-Poincar\'e inequality (\ref{4.1}),(\ref{4.1.5}), and the Poincar\'e inequality in the normal direction: $\|u_t\|\leq h\|u_{t,t}\|\leq h\|e(\Bu)\|,$ which follows from the fact that $u_t=\Bu\cdot \Bn=0$ on one of the faces $S^{\pm}=\Br\pm \frac{h}{2}\Bn.$
\end{proof}

\begin{theorem}[Korn's first inequality]
\label{th:3.2}
Assume that the Gaussian curvature $K_G$ has a constant sign on the mid-surface $S$, i.e., the principal curvatures $\Gk_\Gth$ and $\Gk_z$ never vanish on $S.$ Then the following are true:
\begin{itemize}
\item[(i)] If $K_G>0$, then there exist constants $C_0,C>0,$ such that if $l<C_0$ then
\begin{equation}
  \label{3.4}
\|\nabla\Bu\|^2\leq \frac{C}{h}\|e(\Bu)\|^2,
\end{equation}
for all $h\in(0,1)$ and $\Bu\in V.$
\item[(ii)] If $K_G<0,$ then there exists a constant $C>0,$ such that
\begin{equation}
  \label{3.5}
\|\nabla\Bu\|^2\leq \frac{C}{h^{4/3}}\|e(\Bu)\|^2,
\end{equation}
for all $h\in(0,1)$ and $\Bu\in V.$
\end{itemize}
\end{theorem}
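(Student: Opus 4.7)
The plan is to derive Theorem~\ref{th:3.2} as a short corollary of the Korn second inequality (\ref{3.2}) of Theorem~\ref{th:3.1}, once an appropriate Korn--Poincar\'e bound on the normal component $u_t$ is in hand. Specifically, if Lemma~\ref{lem:4.1}, to be proved in Section~\ref{sec:4}, supplies
\begin{equation*}
\|u_t\|\leq C\,\|e(\Bu)\|\ \ \text{when }K_G>0,\qquad \|u_t\|\leq C\,h^{-1/3}\|e(\Bu)\|\ \ \text{when }K_G<0,
\end{equation*}
for every $\Bu\in V$, then inserting these bounds into (\ref{3.2}) and using $h\in(0,1)$ yields (\ref{3.4}) and (\ref{3.5}) immediately. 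In the positive case each of the summands $\|e(\Bu)\|\|u_t\|/h$, $\|u_t\|^2$, $\|e(\Bu)\|^2$ on the right of (\ref{3.2}) is dominated by $C\|e(\Bu)\|^2/h$; in the negative case the three summands become $Ch^{-4/3}\|e(\Bu)\|^2$, $Ch^{-2/3}\|e(\Bu)\|^2$, $C\|e(\Bu)\|^2$, and the first again dominates, reproducing the exponent $4/3$ exactly.

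The substantive content therefore reduces to proving the two $L^2$ bounds on $u_t$ stated above. My strategy is to exploit the explicit appearance of $u_t$ in the diagonal entries of the simplified gradient (\ref{2.4}): one reads off
\begin{equation*}
e_{\Gth\Gth}=\tfrac{1}{A_\Gth}u_{\Gth,\Gth}+\Gk_\Gth u_t+\text{l.o.t.},\quad e_{zz}=\tfrac{1}{A_z}u_{z,z}+\Gk_z u_t+\text{l.o.t.},\quad 2e_{\Gth z}=\tfrac{1}{A_z}u_{\Gth,z}+\tfrac{1}{A_\Gth}u_{z,\Gth}+\text{l.o.t.},
\end{equation*}
the lower-order terms coming from $\nabla A_\Gth,\nabla A_z$ and controlled by (\ref{2.5}). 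Applying the planar Saint-Venant compatibility operator $\partial_z^2+\partial_\Gth^2-2\partial_\Gth\partial_z$ to these identities and using equality of mixed partials, the in-plane displacement derivatives cancel and one is left with an effective second-order equation for $u_t$ on the mid-surface of the schematic form $\Gk_\Gth u_{t,zz}+\Gk_z u_{t,\Gth\Gth}=(\text{two tangential derivatives of }e(\Bu))+\text{l.o.t.}$, whose principal part is elliptic when $K_G>0$ and hyperbolic when $K_G<0$, the curvature corrections being absorbed through (\ref{2.2}).

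In the elliptic case I would close the estimate by a duality argument: test against the solution $\psi$ of the adjoint equation with matching boundary data from (\ref{3.1}), integrate by parts twice so that only $e(\Bu)$ multiplied by second derivatives of $\psi$ remains, and invoke standard $H^2$ regularity; the smallness assumption $l<C_0$ in part~(i) removes resonant modes and ensures the dual elliptic problem is invertible with a constant independent of $h$, giving $\|u_t\|\leq C\|e(\Bu)\|$. The main obstacle is the hyperbolic case $K_G<0$: the analogous dual problem loses derivatives along the real characteristic families and no uniform $H^2$ bound is available. Following the hint offered by the Tovstik--Smirnov Ansatz cited in the introduction, I would instead test against functions concentrated in a boundary layer of tangential width $\sim h^{1/3}$ and balance that scale against the shell thickness $h$; the exponent $h^{-1/3}$ arises naturally from this balance and matches the known oscillatory Ans\"atze. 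The hard part will be verifying compatibility of the characteristic integration with the mixed periodic/Dirichlet boundary data in (\ref{3.1}) and absorbing all curvature-dependent lower-order terms without losing the $h^{1/3}$ scale; any slack at that stage would ruin the $h^{-4/3}$ exponent in (\ref{3.5}). Once Lemma~\ref{lem:4.1} is established in this form, the theorem itself is the one-line computation recorded in the opening paragraph.
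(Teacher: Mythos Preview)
Your reduction of Theorem~\ref{th:3.2} to Theorem~\ref{th:3.1} together with an $L^{2}$ bound on $u_{t}$ is exactly the paper's logic, and for $K_{G}>0$ the paper's Lemma~\ref{lem:4.1}(ii) does deliver $\|u_{t}\|\le C\|e(\BF)\|$ (in fact all of $\|\Bu\|$). The paper does not use your elliptic duality scheme, however: it tests the diagonal strain entries against $e^{\lambda z}\Gk_{z}u_{z}$, $e^{\lambda z}\Gk_{\Gth}u_{z}$ and integrates by parts to obtain a coercive quadratic form in $(u_{\Gth},u_{z})$, then uses the Codazzi--Gauss identity (\ref{2.2}) in integrated form (your Saint--Venant relation, essentially, but at the level of $L^{2}$ inner products rather than pointwise) plus a Poincar\'e step in $z$ to close on $u_{t}$; the smallness condition $l<C_{0}$ enters in that Poincar\'e step, not as an elliptic non-resonance condition. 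Your $H^{2}$-duality route is plausible and would be a legitimate alternative in the positive case.

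For $K_{G}<0$ there is a genuine gap. The paper's Lemma~\ref{lem:4.1}(i) bounds only $\|u_{\Gth}\|,\|u_{z}\|\le C\|e(\BF)\|$ and says \emph{nothing} about $u_{t}$; no estimate of the form $\|u_{t}\|\le Ch^{-1/3}\|e(\Bu)\|$ is ever proved directly, and your proposed route cannot supply it either. The adjoint of a hyperbolic operator does not return $H^{2}$ control, so after two integrations by parts you have no way to bound $\|\partial^{2}\psi\|$ by $\|u_{t}\|$; and the tangential scale $h^{1/3}$ you invoke belongs to the Ansatz of Section~\ref{sec:6} (a \emph{lower}-bound construction) --- it does not even appear in your mid-surface PDE for $u_{t}$, so it cannot generate an upper bound. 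The paper sidesteps the hyperbolic difficulty entirely by bootstrapping: one integration by parts on $(u_{t},\Gk_{\Gth}u_{t})=(u_{t},e(\BF^{\ast})_{22})-(u_{t},A_{\Gth}^{-1}u_{\Gth,\Gth})$, combined with the already-available bound $\|u_{\Gth}\|\le C\|e(\BF^{\ast})\|$, yields the interpolation
\[
\|u_{t}\|^{2}\le C\bigl(\|\BF^{\ast}\|\cdot\|e(\BF^{\ast})\|+\|e(\BF^{\ast})\|^{2}\bigr),
\]
with the full gradient on the right. Inserting this into (\ref{3.2}) gives, with $X=\|\BF^{\ast}\|$ and $E=\|e(\BF^{\ast})\|$, an inequality $X^{2}\le C\bigl(h^{-1}X^{1/2}E^{3/2}+h^{-1}E^{2}+XE+E^{2}\bigr)$ whose dominant balance is $X^{3/2}\sim h^{-1}E^{3/2}$, i.e.\ $X^{2}\le Ch^{-4/3}E^{2}$. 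This purely algebraic closure is the mechanism that produces the exponent $4/3$; it replaces your hyperbolic analysis and requires nothing beyond the Korn--Poincar\'e bound on the in-plane components.
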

Let us mention that the condition $l<C_0$ is purely of technical character and we believe that it can be removed, which
will be task for future. Finally, we prove the sharpness of the estimates (\ref{3.2}), (\ref{3.4}) and (\ref{3.5}).
\begin{theorem}[Existence of Ans\"atze]
\label{th:3.3}
The exponents of $h$ in the inequalities (\ref{3.2}), (\ref{3.4}) and (\ref{3.5}) are sharp, i.e., for each of them (the cases $K_G>0$, $K_G<0$) there exists a displacement $\Bu\in V$ realizing the asymptotics of $h$ in the corresponding inequality.
\end{theorem}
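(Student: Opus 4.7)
The plan is to produce, in each of the three cases, an explicit family $\Bu_h\in V$ that realises the scaling of the corresponding inequality, i.e., for which the chain of inequalities leading to (\ref{3.2}), (\ref{3.4}), or (\ref{3.5}) is tight up to multiplicative constants. The construction has three ingredients common to all three cases: a Kirchhoff-type extension in the normal direction $t$, an oscillatory in-plane profile chosen in accordance with the sign of the Gaussian curvature, and a multiplicative cutoff in $z$ together with an integer wave number in $\Gth$ that place $\Bu_h$ into the subspace $V$.

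For the elliptic case $K_G>0$, I would use the Kirchhoff-type Ansatz
\begin{align*}
u_t(t,\Gth,z) &= W(\Gth,z),\\
u_\Gth(t,\Gth,z) &= V_\Gth(\Gth,z)-\frac{t\,W_{,\Gth}}{A_\Gth},\\
u_z(t,\Gth,z) &= V_z(\Gth,z)-\frac{t\,W_{,z}}{A_z},
\end{align*}
so that the three shears in (\ref{2.3}) involving $\partial_t$ vanish at $t=0$ and contribute only $O(h)$ to $\|e(\Bu_h)\|$. The mid-surface potentials $(W,V_\Gth,V_z)$ are then chosen so that the three in-plane strain entries of $\BF$ in (\ref{2.4}) cancel against the membrane contributions $\Gk_\Gth W$ and $\Gk_z W$ to leading order. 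In the elliptic case this over-determined system for $(V_\Gth,V_z)$ given $W$ admits only approximate solutions whose residual decreases with higher oscillation frequency, and the optimal balance between this membrane residual and the $O(h^2)$ bending correction is achieved by taking an azimuthal frequency $n\sim h^{-1/2}$. Taking $W$ a fixed smooth profile times $\sin(n\Gth)$ and a smooth cutoff in $z$ supported inside $(1,1+l)$, one verifies $\|e(\Bu_h)\|^2\sim h\,\|\nabla\Bu_h\|^2$, which saturates (\ref{3.4}); the same $\Bu_h$ saturates (\ref{3.2}) because $\|u_t^h\|\cdot\|e(\Bu_h)\|/h$ and $\|\nabla\Bu_h\|^2$ then turn out to be of the same order in $h$.

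For the hyperbolic case $K_G<0$, I would use the Tovstik--Smirnov Ansatz from [\ref{bib:Tov.Smi.}], which exploits the two real asymptotic directions $\Gvf_{\pm}$ on $S$ defined by $\Gk_\Gth\,(d\Gth)^2+\Gk_z\,(dz)^2=0$. Along $\Gvf_{\pm}$ the second fundamental form degenerates, so an oscillatory profile $W=W(\Gvf_+)$ propagating along one asymptotic curve, extended by the same Kirchhoff template as above, produces a membrane residual of order $\lambda^{-1}$ (from the mismatch with the full system away from the characteristic) and a bending contribution of order $h\lambda^2$. Choosing $\lambda\sim h^{-1/3}$ balances these two sources and delivers $\|e(\Bu_h)\|^2/\|\nabla\Bu_h\|^2\sim h^{4/3}$, which is (\ref{3.5}). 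A slow envelope in $z$ supported away from $z=1,1+l$ and an integer wave number along the periodic $\Gth$-direction place $\Bu_h$ in $V$.

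The main technical obstacle in each case is not the scaling of $\|\nabla\Bu_h\|$ but the verification of the precise cancellations that produce the advertised order of $\|e(\Bu_h)\|$. This requires computing all six independent entries of the symmetric gradient from the full $t$-dependent formula (\ref{2.3}) and invoking the Codazzi--Gauss identities (\ref{2.2}) to show that the nominally $O(1)$ contributions from $\Gk u_t$, $u_{\Gth,\Gth}$, $u_{z,z}$, and from the Christoffel-type cross-terms $A_{\Gth,z}u_z$ and $A_{z,\Gth}u_\Gth$ combine into a residual of the claimed order in $h$. A secondary point is that the $z$-cutoff and the $\Gth$-periodicity required for $\Bu_h\in V$ must be arranged not to inflate $\|e(\Bu_h)\|$ beyond the claimed order; this is achieved by taking the cutoff independent of $h$ in the elliptic case and at a scale compatible with the asymptotic oscillation wavelength in the hyperbolic case.
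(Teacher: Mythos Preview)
Your proposal is essentially the paper's own argument: a Kirchhoff-type extension in $t$, the Tovstik--Smirnov oscillatory Ansatz along an asymptotic direction with wave number $\lambda\sim h^{-1/3}$ for $K_G<0$, and an azimuthal oscillation at frequency $n\sim h^{-1/2}$ for $K_G>0$, together with a compactly supported amplitude $\varphi$ to place $\Bu_h$ in $V$. The paper carries this out in Section~\ref{sec:6} exactly as you describe in the hyperbolic case, solving the eikonal equation (\ref{6.4}) for the phase $f$ and reading off the explicit potentials (\ref{6.5}).

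One point of comparison worth noting: in the elliptic case $K_G>0$ the paper is simpler than your sketch. It takes $V_\Gth=V_z=0$ outright (i.e.\ $v=s=0$ in (\ref{6.3})), so there is no attempt to cancel the membrane strains; the diagonal in-plane strains $e(\BF)_{22}\sim\Gk_\Gth W$ and $e(\BF)_{33}\sim\Gk_z W$ stay of order one pointwise. The exponent $h$ then comes not from a decreasing membrane residual but from the growth of $\|\nabla\Bu\|$: the ratio $\|e(\Bu)\|^2/\|\nabla\Bu\|^2$ behaves like $n^{-2}+h^2n^2$, minimised at $n\sim h^{-1/2}$. Your phrasing ``residual decreases with higher oscillation frequency'' is slightly off for the elliptic case---convex mid-surfaces are infinitesimally rigid, so no choice of $(V_\Gth,V_z)$ makes the membrane strain small---but the balance you write down and the final scaling are correct, and the simpler $V_\Gth=V_z=0$ choice already delivers it.
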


\section{The Korn-Poincar\'e inequality}
\label{sec:4}
\setcounter{equation}{0}
One of the key estimates in the proof of Theorem~\ref{th:3.2} is the following Korn-Poincar\'e inequality for the simplified gradient.
\begin{lemma}
\label{lem:4.1}
There exists a constant $C,$ such that the Korn-Poincar\'e inequality holds in the corresponding case:

\begin{itemize}
\item[(i)] If $K_G<0,$ then
\begin{equation}
\label{4.1}
\|u_\Gth\|,\|u_z\|\leq C\|e(\BF)\|,
\end{equation}
for all $h\in(0,1)$ and $\Bu\in V.$
\item[(ii)] If $K_G>0,$ then there exists a constant $C_0>0,$ such that if $l<C_0,$ then
\begin{equation}
\label{4.1.5}
\|\Bu\|\leq C\|e(\BF)\|.
\end{equation}
for all $h\in(0,1)$ and $\Bu\in V.$
\end{itemize}
\end{lemma}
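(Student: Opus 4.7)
The plan is to regard $(u_\Gth, u_z)$ as a tangential vector field on the mid-surface $S$, whose intrinsic two-dimensional strain $\tilde{e}$ (computed with respect to the Levi-Civita connection determined by $A_\Gth, A_z$) is related to the in-plane $2\times 2$ block of $e(\BF)$ by the identities read off from (\ref{2.4}):
\[
(e(\BF))_{22} = \tilde{e}_{11} + \kappa_\Gth u_t, \qquad (e(\BF))_{33} = \tilde{e}_{22} + \kappa_z u_t, \qquad (e(\BF))_{23} = \tilde{e}_{12}.
\]
The normal component $u_t$ appears only in the two diagonal entries, so eliminating it between them yields the pointwise compatibility identity $\kappa_z(e(\BF))_{22} - \kappa_\Gth(e(\BF))_{33} = \kappa_z \tilde{e}_{11} - \kappa_\Gth \tilde{e}_{22}$. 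Coupled with $\tilde{e}_{12} = (e(\BF))_{23}$, this is a first-order linear system for $(u_\Gth, u_z)$ alone, whose principal symbol determinant equals $\kappa_z \xi_1^2/A_\Gth^2 + \kappa_\Gth \xi_2^2/A_z^2$: uniformly elliptic when $K_G > 0$ and hyperbolic when $K_G < 0$. The whole analysis is driven by this dichotomy.

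For the elliptic case (ii), I would cross-differentiate the system to produce the second-order equation $\kappa_\Gth \partial_z^2 u_z + \kappa_z \partial_\Gth^2 u_z = -\partial_z R_1 + \kappa_z \partial_\Gth R_2 + (\text{l.o.t.})$, where $R_1 := \kappa_z(e(\BF))_{22} - \kappa_\Gth(e(\BF))_{33}$ and $R_2 := 2(e(\BF))_{23}$, both controlled in $L^2$ by $\|e(\BF)\|$. Testing against $u_z$ and integrating by parts --- periodicity in $\Gth$ handles the $\Gth$-boundary terms, the Dirichlet condition at $z = 1, 1+l$ kills the $z$-boundary terms --- shifts the derivatives off $R_1, R_2$ and gives
\[
\int \bigl[\kappa_\Gth(\partial_z u_z)^2 + \kappa_z (\partial_\Gth u_z)^2\bigr] \leq C(\|R_1\| + \|R_2\|)\|\nabla u_z\| + (\text{l.o.t.}).
\]
Since $K_G > 0$ the quadratic form is coercive, yielding $\|\nabla u_z\| \leq C\|e(\BF)\|$; the analogous argument with the roles of the two variables swapped controls $\nabla u_\Gth$. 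The Poincar\'e inequality in $z$ (available thanks to the Dirichlet condition at both faces) upgrades these to $L^2$ bounds on $u_\Gth, u_z$, and the hypothesis $l < C_0$ enters here: it makes the $z$-Poincar\'e constant small enough to absorb the zero-order contributions coming from derivatives of $A_\Gth, A_z, \kappa_\Gth, \kappa_z$. The normal component is recovered by the opposite linear combination of the two diagonal identities,
\[
2 K_G\, u_t = \kappa_z (e(\BF))_{22} + \kappa_\Gth (e(\BF))_{33} - \kappa_z \tilde{e}_{11} - \kappa_\Gth \tilde{e}_{22},
\]
which gives $\|u_t\| \leq C(\|e(\BF)\| + \|\nabla(u_\Gth, u_z)\|) \leq C\|e(\BF)\|$, completing (\ref{4.1.5}).

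For the hyperbolic case (i), no $L^2$-bound on $u_t$ by $\|e(\BF)\|$ can hold --- the Tovstik-Smirnov Ansatz used later to prove sharpness exhibits large $u_t$ with small $e(\BF)$ --- so the goal is only the tangential bound (\ref{4.1}). Here the reduced first-order system factors along the real characteristics $z \pm c(A_\Gth/A_z)\Gth = \mathrm{const}$ with $c = \sqrt{-\kappa_\Gth/\kappa_z}$; introducing the Riemann invariants $v = u_\Gth + c u_z$ and $w = u_\Gth - c u_z$ converts it into two transport equations $(c\partial_z + \partial_\Gth)v = g_+$ and $(-c\partial_z + \partial_\Gth)w = g_-$ with sources of size $\|e(\BF)\|$ plus zero-order contributions linear in $(u_\Gth, u_z)$. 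The Dirichlet conditions at $z=1, 1+l$ force $v = w = 0$ on both faces; integrating each transport equation along its characteristic from the bottom face $z=1$, using the $\Gth$-periodicity to match under the change of variables, and applying Cauchy-Schwarz yields bounds of the form $\|v\|, \|w\| \leq C\|e(\BF)\| + C\|(u_\Gth, u_z)\|$. The final step is to absorb the zero-order right-hand side into the left; this is effected by exploiting the Codazzi-Gauss identities (\ref{2.2}), which produce exactly the cancellations needed for the lower-order coefficients to recombine as divergences annihilated by the boundary conditions.

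The main obstacle is precisely this last absorption step in the hyperbolic case: unlike the elliptic case, the principal part is not coercive in any standard energy sense, and the zero-order coupling terms are a priori of the same formal size as the target bound. What saves the argument is the Codazzi-Gauss structure, which forces the precise cancellations necessary for the inequality to close uniformly in the aspect ratio $l$. Verifying these cancellations, and simultaneously ensuring that every boundary term produced by the characteristic or integration-by-parts arguments vanishes thanks to the Dirichlet plus periodic conditions defining $V$, is where the principal technical effort lies.
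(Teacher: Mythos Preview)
Your overall framework---eliminating $u_t$ to obtain a first-order system for $(u_\Gth,u_z)$ and exploiting the elliptic/hyperbolic dichotomy---is reasonable, but the execution diverges substantially from the paper's in both regimes, and in the hyperbolic case there is a genuine gap.

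For $K_G>0$ you propose a second-order elliptic estimate via cross-differentiation, whereas the paper stays entirely at first order. It tests $e(\BF)_{22}$ and $e(\BF)_{33}$ against $\varphi\kappa_z u_z$ and $\varphi\kappa_\Gth u_z$ with an exponential weight $\varphi=e^{\lambda z}$, subtracts, and integrates by parts to produce a weighted quadratic form $Q(u_\Gth,u_z)$; this yields the mutual bounds $\|u_\Gth\|^2\lesssim\|u_z\|^2+\|e(\BF)\|^2$ and vice versa (Step~1). The bound on $\|u_t\|$ (Step~2) comes not from your linear combination of the diagonal entries---that would indeed require $\|\nabla(u_\Gth,u_z)\|$, which the paper never estimates---but from the product identity $(e(\BF)_{22}-\kappa_\Gth u_t)(e(\BF)_{33}-\kappa_z u_t)$, where after integration by parts the Codazzi--Gauss relations convert the dangerous second-derivative-of-metric terms into $+\tfrac12 K_G(u_\Gth^2+u_z^2)$. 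Poincar\'e in $z$ (Step~3) then closes the loop for $l$ small. Your route might be made to work, but it requires controlling first derivatives of $(u_\Gth,u_z)$ rather than just their $L^2$ norms, and the lower-order bookkeeping after cross-differentiation is more delicate than you indicate.

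For $K_G<0$ the gap is real. Your characteristic/Riemann-invariant integration yields $\|v\|,\|w\|\leq C\|e(\BF)\|+C'\|(u_\Gth,u_z)\|$, and you assert that Codazzi--Gauss produces the exact cancellations needed to absorb the second term. No such cancellation is demonstrated, and there is no small parameter available here (the result holds for all $l$, unlike the elliptic case). The paper's mechanism is entirely different and does not use Codazzi--Gauss at all in this case: with $\varphi=e^{\lambda z}$ and $\lambda$ large, the coefficient of $u_z^2$ in $Q$ is $\sim\tfrac12\lambda A_\Gth\kappa_\Gth e^{\lambda z}$ and that of $u_\Gth^2$ is $\sim-\tfrac12\lambda A_\Gth\kappa_z e^{\lambda z}$; since $\kappa_\Gth,\kappa_z$ have \emph{opposite} signs these two coefficients have the \emph{same} sign and dominate the cross term (which carries no factor of $\lambda$), making $Q$ uniformly definite. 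That single observation closes the estimate immediately. Your appeal to structural cancellation via Codazzi--Gauss is at best unverified, and the paper shows it is unnecessary.
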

\begin{proof}
\textbf{The Case $K_G<0.$}
The strategy here is to freeze the variable $t$ and to work solely with the lower right block of the symmetrized matrix $e(\BF).$ We eliminate the component $u_t$ from the $22$ and $33$ entries of the symmetrized simplified gradient $e(\BF),$ getting a suitable identity only in terms of $u_\Gth,$ $u_z$ and $e(\BF),$ which will allow us to estimate the norms of $u_\Gth$ and $u_z.$ We fix any $t\in\left[-h/2,h/2\right]$ and work with the functions $u_t,u_\Gth,u_z$ as functions of two variables $\Gth$ and $z.$ At the end we will integrate the obtained inequalities in $t$ to get estimates in terms of the full norm in $\Omega.$ Assume now $\varphi=\varphi(z)\in C^1(\overline{S})$ is a smooth function depending only on $z$ yet to be chosen. We have that
$$(e(\BF)_{22}, \varphi\Gk_zu_z)=\int_\Omega\varphi A_z\Gk_zu_zu_{\Gth,\Gth}+\int_\Omega\varphi A_\Gth A_z\Gk_\Gth\Gk_z u_zu_t+
\int_\Omega\varphi A_{\Gth,z} \Gk_z u_z^2,$$
and
$$(e(\BF)_{33}, \varphi\Gk_\Gth u_z)=\int_\Omega\varphi A_\Gth \Gk_\Gth u_zu_{z,z}+\int_\Omega\varphi A_\Gth A_z\Gk_\Gth\Gk_z u_zu_t+
\int_\Omega\varphi A_{z,\Gth} \Gk_\Gth u_\Gth u_z,$$
thus we get subtracting
\begin{equation}
\label{4.2}
(e(\BF)_{22},\varphi\Gk_zu_z)-(e(\BF)_{33},\varphi\Gk_\Gth u_z)=I_1+I_2-I_3-I_4,
\end{equation}
where
$$I_1=\int_\Omega\varphi A_z\Gk_zu_zu_{\Gth,\Gth},\ \ I_2=\int_\Omega\varphi A_{\Gth,z} \Gk_z u_z^2,\ \ I_3=\int_\Omega\varphi A_\Gth \Gk_\Gth u_zu_{z,z}, \ \ I_4=\int_\Omega\varphi A_{z,\Gth} \Gk_\Gth u_\Gth u_z.
$$
Next we have by integration by parts,
\begin{align}
\label{4.3}
I_1&=\int_\Omega\varphi A_z\Gk_zu_zu_{\Gth,\Gth}\\ \nonumber
&=-\int_\Omega u_{\Gth}\frac{\partial}{\partial\Gth}(\varphi A_z\Gk_zu_z)\\ \nonumber
&=-\int_\Omega u_{\Gth}u_z \frac{\partial}{\partial\Gth}(\varphi A_z\Gk_z)-I_5,
\end{align}
where
$$I_5=\int_\Omega u_{\Gth}u_{z,\Gth}(\varphi A_z\Gk_z).$$
We have furthermore,
$$I_3=-I_3-\int_\Omega\frac{\partial}{\partial z}(\varphi A_\Gth \Gk_\Gth )u_z^2,$$
thus we get
\begin{equation}
\label{4.4}
I_3=-\frac{1}{2}\int_\Omega\frac{\partial}{\partial z}(\varphi A_\Gth \Gk_\Gth )u_z^2.
\end{equation}
Let us now manipulate the summand $I_5.$ We utilize the equality,
$$A_zu_{z,\Gth}=2A_\Gth A_ze(\BF)_{23}+A_{\Gth,z}u_\Gth+A_{z,\Gth}u_z-A_\Gth u_{\Gth,z}$$
to calculate $I_5$ as follows:
\begin{align}
\label{4.5}
I_5&=\int_\Omega u_{\Gth}u_{z,\Gth}(\varphi A_z\Gk_z)\\ \nonumber
&=\int_\Omega \varphi \Gk_zu_{\Gth}(2A_\Gth A_ze(\BF)_{23}+A_{\Gth,z}u_\Gth+A_{z,\Gth}u_z-A_\Gth u_{\Gth,z})\\ \nonumber
&=2\int_\Omega \varphi \Gk_zA_\Gth A_ze(\BF)_{23}u_{\Gth}+\int_\Omega \varphi \Gk_z A_{\Gth,z}u_{\Gth}^2+
\int_\Omega \varphi \Gk_zA_{z,\Gth}u_{\Gth}u_z- \int_\Omega \varphi \Gk_z  A_\Gth u_{\Gth}u_{\Gth,z}.
\end{align}
We have integrating by parts, that
$$
\int_\Omega \varphi \Gk_z  A_\Gth u_{\Gth}u_{\Gth,z}=-\frac{1}{2}\int_\Omega \frac{\partial}{\partial z}(\varphi \Gk_z A_\Gth)u_{\Gth}^2,
$$
thus we obtain from (\ref{4.5}),
\begin{equation}
\label{4.6}
I_5=2\int_\Omega \varphi \Gk_zA_\Gth A_ze(\BF)_{23}u_{\Gth}+\int_\Omega \varphi \Gk_z A_{\Gth,z}u_{\Gth}^2+
\int_\Omega \varphi \Gk_zA_{z,\Gth}u_{\Gth}u_z+\frac{1}{2}\int_\Omega \frac{\partial}{\partial z}(\varphi \Gk_z A_\Gth)u_{\Gth}^2.
\end{equation}
Finally, we get for $I_1$ from (\ref{4.3}) the equality
\begin{equation}
\label{4.7}
I_1=-2\int_\Omega \varphi \Gk_zA_\Gth A_ze(\BF)_{23}u_{\Gth}
-\int_\Omega \left(\varphi \Gk_zA_{z,\Gth}+\frac{\partial}{\partial\Gth}(\varphi A_z\Gk_z)\right)u_{\Gth}u_z
-\int_\Omega \left(\frac{1}{2}\frac{\partial}{\partial z}(\varphi \Gk_z A_\Gth)+\varphi \Gk_z A_{\Gth,z}\right)u_{\Gth}^2
\end{equation}
and thus combining the equalities (\ref{4.2}), (\ref{4.4}) and (\ref{4.7}), we obtain the formula
\begin{align}
\label{4.8}
(e(\BF)_{22},\varphi\Gk_zu_z)&-(e(\BF)_{33},\varphi\Gk_\Gth u_z)+2(e(\BF)_{23}, \varphi\Gk_zu_\Gth)\\ \nonumber
=&-\int_\Omega \left(\frac{1}{2}\frac{\partial}{\partial z}(\varphi \Gk_z A_\Gth)+\varphi \Gk_z A_{\Gth,z}\right)u_{\Gth}^2
+\int_\Omega\left(\varphi A_{\Gth,z} \Gk_z+\frac{1}{2}\frac{\partial}{\partial z}(\varphi A_\Gth \Gk_\Gth )\right)u_z^2\\ \nonumber
&-\int_\Omega \left(\varphi A_{z,\Gth}(\Gk_\Gth+\Gk_z)+\frac{\partial}{\partial\Gth}(\varphi A_z\Gk_z)\right)u_{\Gth}u_z\\ \nonumber
=&Q(u_\Gth,u_z),
\end{align}
where $Q(v,w)$ is a quadratic form in $v$ and $w.$ Next, we aim to choose the function $\varphi$ such that the form
$Q$ is either positive or negative definite. To that end we choose $\varphi(z)=e^{\lambda z},$ where $\lambda>0$ is a big enough constant. With this choice of $\varphi,$ the coefficient of the function $u_z^2$ in the integrand in $Q(u_\Gth,u_z)$ will become
$$e^{\lambda z}\left(\frac{1}{2}\lambda A_\Gth\Gk_\Gth+A_{\Gth,z}\left(\frac{\Gk_\Gth}{2}+\Gk_z\right)+A_{\Gth}\Gk_{\Gth,z}\right),$$
hence due to the bounds (\ref{2.5}), for bing enough $\lambda$ it will have the sign of $\Gk_\Gth$ and bigger than $\frac{1}{3}e^{\lambda z}\lambda A_\Gth\Gk_\Gth$ in the absolute value uniformly on $S.$ Similarly the coefficient of $u_\Gth^2$ will have the opposite sign of $\Gk_z$ and will be bigger than $\frac{1}{3}e^{\lambda z}\lambda A_\Gth\Gk_z$ in the absolute value uniformly on $S.$ Finally, as the function $\varphi$ does not depend on $\Gth,$ then the coefficient of $u_\Gth u_z$ will be uniformly bounded by $Ce^{\lambda z}$. We choose now the parameter $\lambda $ such that
$$\frac{1}{3}\lambda A_\Gth|\Gk_z|, \ \frac{1}{3}\lambda A_\Gth|\Gk_\Gth|>2C,$$ to ensure the estimate
\begin{equation}
\label{4.9}
|Q(u_\Gth,u_z)|\geq C_1 (\|u_\Gth\|^2+\|u_z\|^2),
\end{equation}
for some constant $C_1>0.$ Therefore, we have by the Schwartz inequality owing to the estimates (\ref{4.8}) and (\ref{4.9}), that
\begin{equation}
\label{4.10}
C_2\|e(\BF)\|(\|u_\Gth\|+\|u_z\|)\geq |Q(u_\Gth,u_z)|\geq C_1 (\|u_\Gth\|^2+\|u_z\|^2),
\end{equation}
which gives the desired estimate (\ref{4.1}). The proof of the case $K_G<0$ is finished now.\\

\textbf{The Case $K_G>0.$} The proof of this case is more technical and consists of several steps. We again freeze the variable $t$ and work solely on the lower right block of the simplified gradient $\BF.$ It will become clear from the proof, that basically the structure of the coefficients of the displacement components and their partial derivatives, that comes from the fact that one is dealing with a full gradient in the lower right block, does not play an important role, but we rather use only some partial information on them such as positivity and boundedness. The proof is divided into 3 steps that are done below.\\
\textbf{Step 1: Bounding $\|u_\Gth\|$ and $\|u_z\|$ in terms of each other.} \textit{In the first step we prove that there exist two constants $c,C_1>0,$ such that}
\begin{equation}
\label{4.12}
\|u_\Gth\|^2\leq C_1e^{cl}(\|u_z\|^2+\|e(\BF)\|^2),\qquad \|u_z\|^2\leq C_1e^{cl}(\|u_\Gth\|^2+\|e(\BF)\|^2).
\end{equation}
For the proof we follow the calculation of the previous case, namely we recall the identity (\ref{4.8}):
\begin{align}
\label{4.13}
(&e(\BF)_{22},\varphi\Gk_zu_z)-(e(\BF)_{33},\varphi\Gk_\Gth u_z)+2(e(\BF)_{23}, \varphi\Gk_zu_\Gth)
+\int_\Omega \left(\frac{1}{2}\frac{\partial}{\partial z}(\varphi \Gk_z A_\Gth)+\varphi \Gk_z A_{\Gth,z}\right)u_{\Gth}^2\\ \nonumber
&=\int_\Omega\left(\varphi A_{\Gth,z} \Gk_z+\frac{1}{2}\frac{\partial}{\partial z}(\varphi A_\Gth \Gk_\Gth )\right)u_z^2
-\int_\Omega \left(\varphi A_{z,\Gth}(\Gk_\Gth+\Gk_z)+\frac{\partial}{\partial\Gth}(\varphi A_z\Gk_z)\right)u_{\Gth}u_z.
\end{align}
We are working in the case $K_G=\Gk_\Gth\Gk_z>0,$ thus $\Gk_\Gth$ and $\Gk_z$ have the same sign, which determines the sign of the coefficient of $u_\Gth^2$ and $u_z^2$ in the integrand on the left hand side and on the right hand side of (\ref{4.13}) respectively.
We have by the bounds (\ref{2.5}) and the Schwartz inequality, that there exists a constant $C>0$ such, that
\begin{align}
\label{4.14}
|(e(\BF)_{22},\varphi\Gk_zu_z)-(e(\BF)_{33},\varphi\Gk_\Gth u_z)+2(e(\BF)_{23}, \varphi\Gk_zu_\Gth)|&\leq \|\sqrt{\varphi}u_\Gth\|^2+\|\sqrt{\varphi}u_z\|^2+C\|\sqrt{\varphi}e(\BF)\|^2\\ \nonumber
\left|\int_\Omega \left(\varphi A_{z,\Gth}(\Gk_\Gth+\Gk_z)+\frac{\partial}{\partial\Gth}(\varphi A_z\Gk_z)\right)u_{\Gth}u_z\right|
&\leq C(\|\sqrt{\varphi}u_\Gth\|^2+\|\sqrt{\varphi}u_z\|^2).
\end{align}
We have like the previous case that,
$$\varphi'(z)=\lambda\varphi(z),$$
thus in order to make the coefficients of $u_\Gth^2$ and $u_z^2$ in (\ref{4.13}) uniformly bounded from below by $\varphi(z),$ we have to choose
$\lambda=c,$ where $c>0$ is a big enough constant. On the other hand the maximal value of $\varphi(z)$ must be controlled
by its minimal value, so that $\varphi$ can be
eliminated from the inequality resulting from (\ref{4.13}) and (\ref{4.14}). We have, that
\begin{equation}
\label{4.14.2}
\frac{\max\varphi(z)}{\min\varphi(z)}=e^{cl},
\end{equation}
thus combining (\ref{4.13}) and (4.14) we obtain the desired estimate (\ref{4.12}) via the observation done in the case
$K_G<0.$ \\
\textbf{Step 2: Bounding $\|u_t\|$ in terms of $\|u_\Gth\|$ and $\|u_z\|.$} \textit{In the second step we prove that there exists a constant $C_2>0,$ such that}
\begin{equation}
\label{4.15}
\|u_t\|^2\leq C_2(\|u_\Gth\|^2+\|u_z\|^2+\|e(\BF)\|^2).
\end{equation}
We aim to get an identity involving the component $u_t.$
Namely, we have on one hand that
\begin{align*}
(e(\BF)_{22}-\Gk_\Gth u_t, e(\BF)_{33}-\Gk_z u_t)&=(e(\BF)_{22}, e(\BF)_{33})-(\Gk_\Gth u_t,e(\BF)_{33})\\
&-(\Gk_z u_t,e(\BF)_{22})+(\Gk_\Gth u_t,\Gk_z u_t),
\end{align*}
thus we get by the Schwartz inequality and the positivity of the Gaussian curvature $K_G=\Gk_\Gth\Gk_z,$ that
\begin{equation}
\label{4.16}
(e(\BF)_{22}-\Gk_\Gth u_t, e(\BF)_{33}-\Gk_z u_t)\geq C_3\|u_t\|^2-C_4\|e(\BF)\|^2,
\end{equation}
for some constants $C_3,C_4>0.$ We have on the other hand, that
\begin{align}
\label{4.17}
(&e(\BF)_{22}-\Gk_\Gth u_t, e(\BF)_{33}-\Gk_zu_t)
=\int_\Omega\left(u_{\Gth,\Gth}+\frac{A_{\Gth,z}}{A_\Gth}u_z\right)\left(u_{z,z}+\frac{A_{z,\Gth}}{A_z}u_\Gth\right)\\ \nonumber
&=I_1+I_2+I_3+I_4,
\end{align}
where
\begin{equation}
\label{4.18}
I_1=\int_\Omega u_{\Gth,\Gth}u_{z,z},\ \ I_2=\int_\Omega\frac{A_{\Gth,z}A_{z,\Gth}}{A_\Gth A_z}u_zu_\Gth,\ \
I_3=\int_\Omega\frac{A_{\Gth,z}}{A_\Gth}u_zu_{z,z},\ \ I_4=\int_\Omega\frac{A_{z,\Gth}}{A_z}u_\Gth u_{\Gth,\Gth}
\end{equation}
Next we calculate by integration by parts,
\begin{equation}
\label{4.19}
I_1=\int_\Omega u_{\Gth,z}u_{z,\Gth},\quad I_3=-\frac{1}{2}\int_\Omega\frac{\partial}{\partial z}\left(\frac{A_{\Gth,z}}{A_\Gth}\right)u_z^2,\quad
I_4=-\frac{1}{2}\int_\Omega\frac{\partial}{\partial\Gth}\left(\frac{A_{z,\Gth}}{A_z}\right)u_\Gth^2.
\end{equation}
We have again by integrating by parts,
\begin{align}
\label{4.20}
(F_{23}, 2e(\BF)_{23}-F_{23})&=2(e(\BF)_{23},F_{23})-\|F_{23}\|^2\\ \nonumber
&=\int_\Omega\left(u_{z,\Gth}-\frac{A_{\Gth,z}}{A_\Gth}u_\Gth\right)\left(u_{\Gth,z}-\frac{A_{z,\Gth}}{A_z}u_z\right)\\ \nonumber
&=\int_\Omega u_{\Gth,z}u_{z,\Gth}+\int_\Omega\frac{A_{\Gth,z}A_{z,\Gth}}{A_\Gth A_z}u_zu_\Gth
+\frac{1}{2}\int_\Omega\frac{\partial}{\partial z}\left(\frac{A_{\Gth,z}}{A_\Gth}\right)u_\Gth^2
+\frac{1}{2}\int_\Omega\frac{\partial}{\partial \Gth}\left(\frac{A_{z,\Gth}}{A_z}\right)u_z^2.
\end{align}
Therefore combining the identities (\ref{4.17})-(\ref{4.20}) we get,
\begin{align*}
(e(\BF)_{22}-\Gk_\Gth u_t, e(\BF)_{33}-\Gk_zu_t)-&2(e(\BF)_{23},F_{23})+\|F_{23}\|^2\\
&=-\frac{1}{2}\int_\Omega\left(\frac{\partial}{\partial z}\left(\frac{A_{\Gth,z}}{A_\Gth}\right)+\frac{\partial}{\partial \Gth}\left(\frac{A_{z,\Gth}}{A_z}\right)\right)(u_\Gth^2+u_z^2).\\
\end{align*}
Here it comes to the place, where we use the Codazzi-Gauss identities\footnote{Although, the smoothness of the mid-surface $S$ would deliver the same estimates} (\ref{2.2}), namely we get from (\ref{2.2}) and the last identity that
\begin{equation}
\label{4.21}
(e(\BF)_{22}-\Gk_\Gth u_t, e(\BF)_{33}-\Gk_zu_t)+\|F_{23}\|^2=\frac{1}{2}(\|\sqrt{K_G}u_\Gth\|^2+\|\sqrt{K_G}u_z\|^2)+2(e(\BF)_{23},F_{23}).
\end{equation}
Combining now (\ref{4.16}) and (\ref{4.21}) we get
\begin{equation}
\label{4.22}
\|F_{23}\|^2+C_3\|u_t\|^2\leq C_4\|e(\BF)\|^2+2(e(\BF)_{23},F_{23})+\frac{1}{2}(\|\sqrt{K_G}u_\Gth\|^2+\|\sqrt{K_G}u_z\|^2).
\end{equation}
Finally, by the Schwartz inequality
$$2|(e(\BF)_{23},F_{23})|\leq 2\|F_{23}\|\cdot\|e(\BF)\|\leq\|F_{23}\|^2+\|e(\BF)\|^2,$$
estimate (\ref{4.22}) immediately implies
$$
C_3\|u_t\|^2\leq (C_4+1)\|e(\BF)\|^2+\frac{1}{2}(\|\sqrt{K_G}u_\Gth\|^2+\|\sqrt{K_G}u_z\|^2),
$$
that has exactly the form of (\ref{4.15}).\\
\textbf{Step 3: Bounds via Poincar\'e inequality.} \textit{In this step we finish the proof of the lemma.}\\
We have by the Poincar\'e inequality, that
$$\left\|\frac{1}{A_z}u_{z,z}\right\|^2\geq \frac{C}{l^2}\|u_z\|^2,$$
for some constant $C>0,$ thus we get utilizing the estimates (\ref{4.12}) and (\ref{4.15}) and the triangle inequality, that
\begin{align*}
\frac{C}{l^2}\|u_z\|^2&\leq \left\|\frac{1}{A_z}u_{z,z}\right\|^2\\
&\leq C_3(\|e(\BF)_{33}\|^2+\|u_t\|^2+\|u_\Gth\|^2)\\
&\leq C_4\|e(\BF)_{33}\|^2+C_5\|u_z\|^2+C_6e^{cl}\|u_z\|^2,
\end{align*}
which then bounds the norm $\|u_z\|$ in terms of the norm $\|e(\BF)_{33}\|$ as long as
$$C>l^2(C_5+C_6e^{cl}),$$
which is clearly satisfied as long as the shell width $l$ in the $z$ principal direction is small enough.
Now, as long as one has the bound $\|u_z\|\leq C\|e(\BF)_{33}\|,$ the proof of the lemma is achieved by
virtue of the estimates (\ref{4.12}) and (\ref{4.15}). The proof of the lemma is finished now.

\end{proof}

\section{Ansatz-free lower bounds}
\label{sec:5}
\setcounter{equation}{0}
Due to the Korn-Poincar\'e inequalities proven in the previous section and for the sake of simplicity, we make a further simplifying notation
\begin{equation}
  \label{5.1}
\BF^\ast=
\begin{bmatrix}
u_{t,t} & \dfrac{u_{t,\Gth}}{A_{\Gth}} & \dfrac{u_{t,z}}{A_{z}}\\[3ex]
u_{\Gth,t}  & \dfrac{A_{z}u_{\Gth,\Gth}+A_{z}A_{\Gth}\Gk_{\Gth}u_{t}}{A_{z}A_{\Gth}} & \dfrac{u_{\Gth,z}}{A_{z}}\\[3ex]
u_{ z,t}  & \dfrac{u_{z,\Gth}}{A_{\Gth}} & \dfrac{A_{\Gth}u_{z,z}+A_{z}A_{\Gth}\Gk_{z}u_{t}}{A_{z}A_{\Gth}}
\end{bmatrix}.
\end{equation}
It is clear from the estimates (\ref{4.1}) and (\ref{4.1.5}) and the triangle inequality, that we have in both cases $(K_G>0, K_G<0)$ the estimates
\begin{equation}
  \label{5.2}
\|\BF-\BF^\ast\|\leq C\|e(\BF)\|,\quad \|e(\BF^\ast)\|\leq C\|e(\BF)\|.
\end{equation}
Next we prove the following modification of Korn's second inequality:
\begin{lemma}
\label{lem:5.1}
There exists a constant $C,$ such that the Korn's second-like inequality holds:
\begin{equation}
\label{5.3}
\|\BF^\ast\|^2\leq C\left(\frac{\|u_t\|\cdot\|e(\BF^\ast)\|}{h}+\|u_t\|^2+\|e(\BF^\ast)\|^2\right).
\end{equation}
\end{lemma}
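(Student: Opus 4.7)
The plan is to decompose $\|\BF^\ast\|^2=\|e(\BF^\ast)\|^2+\|W\|^2$, where $W$ is the antisymmetric part of $\BF^\ast$ (valid because the Frobenius inner product orthogonalizes symmetric and antisymmetric matrices), so that the lemma reduces to the bound $\|W\|^2\le C(\|u_t\|\|e(\BF^\ast)\|/h+\|u_t\|^2+\|e(\BF^\ast)\|^2)$. Equivalently, since $\BF^\ast_{ji}=2e(\BF^\ast)_{ij}-\BF^\ast_{ij}$, it suffices to bound each off-diagonal entry $\|\BF^\ast_{ij}\|^2$ by the right-hand side. The pair $(\BF^\ast_{23},\BF^\ast_{32})=(u_{\Gth,z}/A_z,u_{z,\Gth}/A_\Gth)$ is fully tangential and can be handled by a slice-wise two-dimensional Korn--Poincar\'e estimate for $(u_\Gth,u_z)$ on $S$ at each fixed $t$, using the Dirichlet condition at $z\in\{1,1+l\}$ and the $\omega$-periodicity in $\Gth$; integrating over $t\in[-h/2,h/2]$ contributes at most $C(\|e(\BF^\ast)\|^2+\|u_t\|^2)$, with no $1/h$ factor. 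The remaining (and delicate) entries are the mixed pairs $(\BF^\ast_{12},\BF^\ast_{21})$ and $(\BF^\ast_{13},\BF^\ast_{31})$; I will treat the first pair, the second being analogous by the symmetry $\Gth\leftrightarrow z$.

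Using $u_{t,\Gth}=2A_\Gth e(\BF^\ast)_{12}-A_\Gth u_{\Gth,t}$, expand
\[
\|u_{t,\Gth}\|^2=2\int_\Omega u_{t,\Gth}\,A_\Gth\,e(\BF^\ast)_{12}\,d\Omega-\int_\Omega u_{t,\Gth}\,A_\Gth\,u_{\Gth,t}\,d\Omega.
\]
The first integral is absorbed by Cauchy--Schwarz. For the mixed-derivative integral, the key identity is
\[
\partial_t\partial_\Gth(u_tu_\Gth)=u_{t,t\Gth}u_\Gth+u_{t,t}u_{\Gth,\Gth}+u_{t,\Gth}u_{\Gth,t}+u_tu_{\Gth,t\Gth},
\]
which, after multiplying by the $t$-independent weight $A_\Gth^2A_z$, integrating over $\Omega$, and using $\omega$-periodicity in $\Gth$, reduces the total integral of the left-hand side to a boundary contribution at $t=\pm h/2$ of the form $-\int_S(A_\Gth^2A_z)_{,\Gth}[u_tu_\Gth]_{-h/2}^{h/2}d\Gth dz$. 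Among the four right-hand terms, $u_{t,t}=e(\BF^\ast)_{11}$ and the Clairaut identity together with $u_{\Gth,\Gth}=A_\Gth(e(\BF^\ast)_{22}-\Gk_\Gth u_t)$ convert the terms containing $u_{t,t\Gth}$ and $u_{t,t}u_{\Gth,\Gth}$ into expressions bounded by $\|e(\BF^\ast)\|^2+\|e(\BF^\ast)\|\|u_t\|$, using the Korn--Poincar\'e bound from Lemma~\ref{lem:4.1} (transferred to $e(\BF^\ast)$ via (\ref{5.2})). The last term $\int u_tu_{\Gth,t\Gth}$ is integrated by parts in $t$, producing an interior piece $\int u_{t,t}e(\BF^\ast)_{22}\lesssim\|e(\BF^\ast)\|^2$ and a boundary piece involving $[u_te(\BF^\ast)_{22}]_{-h/2}^{h/2}$.

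This last boundary piece is where the $1/h$ arises. It is estimated via the Agmon-type trace inequality $\|f(\pm h/2,\cdot)\|_{L^2(S)}^2\le C(h^{-1}\|f\|_{L^2(\Omega)}^2+h\|f_{,t}\|_{L^2(\Omega)}^2)$, applied with $f=u_t$ (where $\|u_{t,t}\|\le\|e(\BF^\ast)\|$ gives the trace bound $Ch^{-1}\|u_t\|^2+Ch\|e(\BF^\ast)\|^2$) and with $f=u_\Gth$ (where Lemma~\ref{lem:4.1} furnishes $\|u_\Gth\|\le C\|e(\BF^\ast)\|$); Cauchy--Schwarz on the pairing $u_t(\pm h/2)\cdot u_\Gth(\pm h/2)$ produces precisely $\|u_t\|\|e(\BF^\ast)\|/h$ plus lower-order terms that are absorbable into $\|u_t\|^2$, $\|e(\BF^\ast)\|^2$, and $\eps\|u_{\Gth,t}\|^2$. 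To apply this clean trace scheme one must not encounter $u_t\cdot u_{\Gth,\Gth}$ directly on the face (whose trace would require the uncontrolled $\|u_{\Gth,\Gth t}\|$); this is bypassed by one further integration by parts in $\Gth$ on each face $t=\pm h/2$, transferring the $\Gth$-derivative from $u_\Gth$ onto $u_t$, which yields the tractable pairing $u_{t,\Gth}\cdot u_\Gth$ that combines algebraically with the boundary integral already produced by the $\partial_t\partial_\Gth(u_tu_\Gth)$ identity to close the estimate.

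\emph{The main obstacle} is exactly this last accounting: when one integrates by parts on the face, a trace of $u_{t,\Gth}$ appears whose naive trace estimate would require $\|u_{t,\Gth t}\|=\|e(\BF^\ast)_{11,\Gth}\|$, a quantity not controlled by the data. The trick is that the two dangerous boundary terms arising from the two distinct IBP paths differ by an \emph{easy} quantity (their sum equals $-\int_S(A_\Gth^2A_z)_{,\Gth}[u_tu_\Gth]_{-h/2}^{h/2}$ modulo lower-order contributions), so they can be eliminated together without ever taking the trace of $u_{t,\Gth}$ alone. Putting the pieces together and absorbing the $\eps$-terms into the left-hand side yields the bound on $\|u_{t,\Gth}\|^2$; the parallel argument gives the bound on $\|u_{t,z}\|^2$; combined with the 2D Korn bound on the $23$-entries and the trivial diagonal bound, this proves (\ref{5.3}).
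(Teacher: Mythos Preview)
Your treatment of the $23$-block is fine in spirit (and the paper does something even simpler there: it integrates by parts to get $(F^\ast_{23},F^\ast_{32})=\int u_{\Gth,z}u_{z,\Gth}=\int u_{\Gth,\Gth}u_{z,z}=(e(\BF^\ast)_{22}-\Gk_\Gth u_t,\,e(\BF^\ast)_{33}-\Gk_z u_t)$, avoiding any appeal to a general $2$D Korn inequality). The real issue is with the $12$- and $13$-blocks.

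Your argument for $\|u_{t,\Gth}\|^2$ ultimately reduces to controlling the cross term $\int_\Omega u_{t,\Gth}u_{\Gth,t}$, and every integration-by-parts route you describe lands on the boundary term $\big[\int_S u_t\,u_{\Gth,\Gth}\big]_{-h/2}^{h/2}$ (equivalently $-\big[\int_S u_{t,\Gth}\,u_{\Gth}\big]_{-h/2}^{h/2}$; these are the \emph{same} term related by IBP on the face, not two independent contributions that can cancel). Writing $u_{\Gth,\Gth}=A_\Gth(e(\BF^\ast)_{22}-\Gk_\Gth u_t)$, the $u_t^2$-piece is harmless via the fundamental theorem of calculus, but the remaining piece $\big[\int_S u_t\,e(\BF^\ast)_{22}\big]_{-h/2}^{h/2}$ is not: any trace estimate for $e(\BF^\ast)_{22}$ on $t=\pm h/2$ requires $\|\partial_t e(\BF^\ast)_{22}\|$, which contains $u_{\Gth,\Gth t}$, a second-order quantity not controlled by the data. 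The ``trick'' you invoke---that two dangerous boundary terms differ by something easy---does not materialize, because the two IBP paths (first $t$ then $\Gth$, versus first $\Gth$ then $t$) produce \emph{identical} boundary contributions up to the harmless weight term you already identified. So the accounting does not close.

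The paper avoids boundary terms at $t=\pm h/2$ altogether by working slice-wise on thin rectangles $R=(-h/2,h/2)\times(0,L)$ and proving an auxiliary interpolation inequality (Lemma~\ref{lem:5.2}) for perturbed $2\times 2$ gradients $\BT=\begin{pmatrix}u_{,x}&u_{,y}\\v_{,x}&v_{,y}+\varphi u\end{pmatrix}$. The mechanism there is a \emph{harmonic decomposition}: one subtracts from $u$ its harmonic extension $w$ in $R$, observes that $\triangle(u-w)$ is a first-order differential expression in $e(\BT)$, and obtains $\|\nabla(u-w)\|\le C\|e(\BT)\|$ by an energy estimate. The harmonic part $w$ is then handled by a gradient-separation estimate for harmonic functions on thin strips, $\|w_{,y}\|^2\le Ch^{-1}\|w\|\,\|w_{,x}\|+\|w_{,x}\|^2$, which is where the factor $\|u_t\|\|e(\BF^\ast)\|/h$ originates---with no trace terms at all. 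This is the missing ingredient in your scheme; the direct integration-by-parts route cannot substitute for it because of the uncontrollable face term.
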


\begin{proof}
We prove the estimate (\ref{5.3}) block by block by freezing each of the variables $t,$ $\Gth$ and $z$. We consider the three $2\times 2$ blocks of the matrix $\BF^\ast$ as follows:\\

\textbf{The block $23$.} We aim to prove the estimate
\begin{equation}
\label{5.4}
\|F_{23}^\ast\|^2+\|F_{32}^\ast\|^2\leq C(\|u_t\|^2+\|e(\BF^\ast)\|^2).
\end{equation}
We have integrating by parts that
\begin{align*}
(F_{23}^\ast,F_{32}^\ast)&=\int_\Omega u_{\Gth,z}u_{z,\Gth}\\
&=\int_\Omega u_{\Gth,\Gth}u_{z,z}\\
&=(F_{22}^\ast-\Gk_\Gth u_t, F_{33}^\ast-\Gk_z u_t),
\end{align*}
thus owing to the bounds (\ref{2.5}) we obtain
\begin{equation}
\label{5.5}
|(F_{23}^\ast,F_{32}^\ast)|\leq C(\|u_t\|^2+\|e(\BF^\ast)\|^2).
\end{equation}
We can now estimate utilizing the bound (\ref{5.5}), that
\begin{align*}
\|F_{23}^\ast\|^2+\|F_{32}^\ast\|^2&=\|F_{23}^\ast+F_{32}^\ast\|^2-2(F_{23}^\ast,F_{32}^\ast)\\
&\leq 4\|e(\BF^\ast)\|^2+C(\|u_t\|^2+\|e(\BF^\ast)\|^2),
\end{align*}
which is the desired estimate (\ref{5.4}).\\

\textbf{The block $13$.} For the block $13$ we freeze the variable $\Gth$ and deal with two-variable functions. We aim to prove the estimate
\begin{equation}
\label{5.6}
\|F_{13}^\ast\|^2+\|F_{31}^\ast\|^2\leq C\left(\frac{\|u_t\|\cdot \|e(\BF^\ast)\|}{h}+\|e(\BF^\ast)\|^2\right).
\end{equation}
The key estimate here is the following Korn-like inequality for perturbed gradients on thin rectangles.
\begin{lemma}
\label{lem:5.2}
For $L,h>0$ denote $R=(0,h)\times(0,L).$ Assume the displacement $\BU=(u(x,y),v(x,y))\in H^1(R,\mathbb R^2)$ satisfies either of the boundary conditions below in the sense of traces:
\begin{itemize}
\item[(i)] $v(x,0)=0$ for all $x\in(0,h),$
\item[(ii)] $u(x,0)=u(x,L)$ for all $x\in(0,h).$
\end{itemize}
For any function $\varphi(y)\in L^\infty(0,L)$ denote the perturbed gradient (dropping the dependence on $\BU$ and $\varphi$) as follows:
\begin{equation}
\label{5.6.1}
\BT=
\begin{bmatrix}
u_{,x} & u_{,y}\\
v_{,x} & v_{,y}+\varphi u
\end{bmatrix}.
\end{equation}
Then the following first and a half Korn inequality holds:
\begin{equation}
\label{5.7}
\|\BT\|^2\leq C\left(\frac{\|u\|\cdot \|e(\BT)\|}{h}+\|e(\BT)\|^2\right),
\end{equation}
for all $h\in(0,1),$ where $C$ depends only on the quantity $\|\varphi\|_{L^\infty}.$ Here, as always $e(\BT)=\frac{1}{2}\left(\BT+\BT^T\right)$
and the norm $\|\cdot\|$ is the $L^2(R)$ norm.
\end{lemma}

\begin{proof}
First of all let us mention that all norms are $L^2(R)$ norms in the proof, and we will drop the dependence $L^2(R)$ for the sake of brevity.
We adopt the strategy of harmonic projections [\ref{bib:Kon.Ole.1},\ref{bib:Kon.Ole.2}]. Assume $w(x,y)$ is the harmonic part of $u$ in $R,$ i.e., it is the unique solution of the Dirichlet boundary value problem
\begin{equation}
\label{5.6.2}
\begin{cases}
\triangle w(x,y)=0, & (x,y)\in R\\
w(x,y)=u(x,y), & (x,y)\in \partial R.
\end{cases}
\end{equation}
Then we have in the sense of distributions, that
\begin{align}
\label{5.6.3}
\triangle (u-w)&=\triangle u\\ \nonumber
&=u_{,xx}+u_{,yy}\\ \nonumber
&=(e_{11}(\BT))_{,x}+(u_{,y}+v_{,x})_{,y}-v_{,xy}\\ \nonumber
&=(e_{11}(\BT))_{,x}+(2e_{12}(\BT))_{,y}-(v_{,y}+\varphi(y)u)_{,x}+\varphi(y)u_{,x}\\ \nonumber
&=(e_{11}(\BT))_{,x}+(2e_{12}(\BT))_{,y}-(e_{22}(\BT))_{,x}+\varphi(y)e_{11}(\BT).
\end{align}
Thus multiplying (\ref{5.6.3}) by $u-w$ and integrating by parts over $R$ we discover
$$
-\int_{R}|\nabla(u-w)|^2=-\int_{R}\left((u-w)_{,x}(e_{11}(\BT)-e_{22}(\BT))+2(u-w)_{,y}e_{12}(\BT)-\varphi(y)e_{11}(\BT)(u-w)\right),
$$
thus we get by the Schwartz inequality,
\begin{equation}
\label{5.6.4}
\|\nabla(u-w)\|^2\leq 4\|e(\BT)\|\|\nabla(u-w)\|+\|\varphi\|_{L^\infty}\|e(\BT)\|\|u-w\|.
\end{equation}
Due to the fact that $u-w$ vanishes on the whole boundary of $R,$ we have on the other hand by the Poincar\'e inequality (not with the best constant),
\begin{equation}
\label{5.6.5}
\|u-w\|\leq h\|\nabla(u-w)\|.
\end{equation}
Therefore, combining (\ref{5.6.4}) and (\ref{5.6.5}) we obtain the estimates
\begin{equation}
\label{5.6.6}
\|\nabla(u-w)\|\leq C\|e(\BT)\|,\qquad \|u-w\|\leq Ch\|e(\BT)\|.
\end{equation}
Next we recall the following harmonic function gradient separation estimate [\ref{bib:Gra.Har.1}, Lemma~4.3, \ref{bib:Harutyunyan.1}, Theorem~1.1].
\begin{lemma}
\label{lem:5.2.1}
Let $L,h$ and $R$ be as in Lemma~\ref{lem:5.2}. Assume the function $w(x,y)\in H^1(R)$ is harmonic in $R$ and satisfies the boundary conditions $w(x,0)=w(x,L)$ for all $x\in(0,h)$ in the
sense of traces. Then the following gradient separation estimate holds:
\begin{equation}
\label{5.6.7}
\|w_{,y}\|^2\leq\frac{2\sqrt 3}{h}\|w\|\cdot\|w_{,x}\|+\|w_{,x}\|^2.
\end{equation}
\end{lemma}
Let us show, that we can apply Lemma~\ref{lem:5.2.1} to the harmonic part $w$ of $u$. If the component $u$ satisfies the boundary conditions
$(ii)$, then $w$ does so too and we are done. If instead the $v$ component satisfies the boundary conditions $(i)$, then we can extend the displacement $\BU$ to $\overline{\BU}=(\overline{u},\overline{v})$ over the doubled rectangle $(0,h)\times(-L,L)$ as follows:
$$
\begin{cases}
\overline{u}(x,y)=u(x,y), & y\in(0,L),\\
\overline{u}(x,y)=u(x,-y), & y\in(-L,0)
\end{cases},\qquad
\begin{cases}
\overline{v}(x,y)=v(x,y), & y\in(0,L),\\
\overline{v}(x,y)=-v(x,-y), & y\in(-L,0)
\end{cases}.
$$
The function $\varphi(y)$ is extended to $(-L,0)$ by the mirror reflection as well, i.e., $\varphi(y)=\varphi(-y),$ for $y\in(0,L).$
By the boundary condition $v(x,0)=0$ we get for the extended displacement $\overline{\BU}\in H^1\left((0,h)\times(-L,L)\right)$ and also
it is easy to check that

\begin{align*}
\overline{\BT}(x,y)&=
\begin{bmatrix}
u_{,x}(x,y) & u_{,y}(x,y)\\
v_{,x}(x,y) & v_{,y}(x,y)+\varphi(y)u(x,y)
\end{bmatrix},\quad\text{if}\quad y\in(0,L),\quad\\
\overline{\BT}(x,-y)&=
\begin{bmatrix}
u_{,x}(x,y) & -u_{,y}(x,y)\\
-v_{,x}(x,y) & v_{,y}(x,y)+\varphi(y)u(x,y)
\end{bmatrix},\quad\text{if}\quad y\in(0,L),
\end{align*}
thus we get the identities
\begin{equation}
\label{5.6.8}
\|\overline{\BT}\|_{L^2(2R)}^2=2\|\BT\|_{L^2(R)}^2,\quad
\|e(\overline{\BT})\|_{L^2(2R)}^2=2\|e(\BT)\|_{L^2(R)}^2,\quad
\|\overline{u}\|_{L^2(2R)}^2=2\|u\|_{L^2(R)}^2,
\end{equation}
where $2R=(0,h)\times(-L,L).$ Moreover, the new displacement $\overline{\BU}$ satisfies the boundary condition
$\overline{u}(x,-L)=\overline{u}(x,L)$ for all $x\in(0,h).$ Thus, in the case when boundary conditions $(i)$ are satisfied,
we initially extend $\BU$ and work with $\overline{\BU}$ that satisfies the boundary condition $(ii)$. Also, due
to the equalities in (\ref{5.6.8}) the obtained estimate gives exactly the estimate (\ref{5.7}).
We now have by Lemma~\ref{lem:5.2.1}, the estimates (\ref{5.6.6}), and the triangle inequality, that
\begin{align*}
\|u_{,y}\|^2&\leq 2(\|u_{,y}-w_{,y}\|^2+\|w_{,y}\|^2)\\
&\leq 2(\|\nabla(u-w)\|^2+\frac{2\sqrt 3}{h}\|w\|\cdot\|w_{,x}\|+\|w_{,x}\|^2\\
&\leq 2C^2\|e(\BT)\|^2+\frac{2\sqrt 3}{h}(\|u\|+Ch\|e(\BT)\|)(\|u_{,x}\|+\|\nabla(u-w)\|)+(\|u_{,x}\|+\|\nabla(u-w)\|)^2\\
&\leq 2C^2\|e(\BT)\|^2+\frac{2\sqrt 3}{h}(\|u\|+Ch\|e(\BT)\|)(\|e(\BT)\|+C\|e(\BT)\|)+(\|e(\BT)\|+C\|e(\BT)\|)^2,
\end{align*}
which gives the estimate
\begin{equation}
\label{5.6.7}
\|u_{,y}\|^2\leq C\left(\frac{\|u\|\cdot \|e(\BT)\|}{h}+\|e(\BT)\|^2\right),
\end{equation}
for some constant $C>0$ depending only on $\|\varphi\|_{L^\infty}.$ The $v_{,x}$ component of the gradient is estimated in terms of
the $u_{,y}$ component and $e_{12}(\BT)$ via triangle inequality. The proof of the lemma is finished now.
\end{proof}
We now fix any $\Gth\in [0,\omega]$ and consider the displacement $\BU=(u_t,A_zu_z)$ and the function
$\varphi(z)=A_z\kappa_z$ in the variables $t$ and $z$ over the thin rectangle
$R_\Gth=(-h/2,h/2)\times(1,1+l),$ which clearly satisfies the boundary conditions $(ii)$ indicated in Lemma~\ref{lem:5.2} due to the boundary conditions (\ref{3.1}).
We can calculate
$$\BT=
\begin{bmatrix}
u_{t,t} & u_{t,z}\\
A_zu_{z,t} & A_zu_{z,z}+A_z^2\kappa_z u_t+A_{z,z}u_{z}
\end{bmatrix},
$$
thus taking into account the fact, that $A\geq A_\Gth,A_z\geq a>0$ and $|A_{z,z}|\leq B$ (bounds (\ref{2.5})), we get by Lemma~\ref{lem:5.2} and the triangle inequality,
\begin{align*}
\|u_{t,z}\|_{L^2(R_\Gth)}^2+&\|u_{z,t}\|_{L^2(R_\Gth)}^2\leq C\left(\frac{\|u_t\|_{L^2(R_\Gth)}\cdot \|e(\BT)\|_{L^2(R_\Gth)}}{h}+\|e(\BT)\|_{L^2(R_\Gth)}^2\right)\\
&\leq C\left(\frac{\|u_t\|_{L^2(R_\Gth)}(\|e(\BF^\ast)\|_{L^2(R_\Gth)}+\|u_z\|_{L^2(R_\Gth)})}{h}+
\|e(\BF^\ast)\|_{L^2(R_\Gth)}^2+\|u_z\|_{L^2(R_\Gth)}^2\right).
\end{align*}
Integrating the last inequality in $\Gth$ over $[0,\omega]$ and applying the Schwartz inequality to the product term we obtain the same
estimate for the full $L^2(\Omega)$ norms:
\begin{align*}
\|u_{t,z}\|^2+&\|u_{z,t}\|^2\leq C\left(\frac{\|u_t\|(\|e(\BF^\ast)\|+\|u_z\|)}{h}+
\|e(\BF^\ast)\|^2+\|u_z\|^2\right),
\end{align*}
which implies by virtue of Lemma~\ref{lem:4.1},
$$\|u_{t,z}\|^2+\|u_{z,t}\|^2\leq C\left(\frac{\|u_t\|\cdot \|e(\BF^\ast)\|}{h}
+\|e(\BF^\ast)\|^2\right),$$
i.e, we arrive at the desired estimate (\ref{5.6}).\\

\textbf{The block $12$.} The block $12$ corresponds to freezing the variable $z$ and is made by the entries of $\BF^\ast$ with spots $11,$ $12,$ $21$ and $22.$ We aim to prove the estimate
\begin{equation}
\label{5.8}
\|F_{12}^\ast\|^2+\|F_{21}^\ast\|^2\leq C\left(\frac{\|u_t\|\cdot \|e(\BF^\ast)\|}{h}+\|e(\BF^\ast)\|^2\right).
\end{equation}
The proof is analogous to the proof of the previous case. Indeed, we fix any $z\in[0,L]$ and consider the displacement $\BU=(u_t,A_\Gth u_\Gth)$ and the function $\varphi(\Gth)=A_\Gth\kappa_\Gth$ in the variables $t$ and $\Gth$ over the thin rectangle $R_z=(-h/2,h/2)\times(0,\omega),$ which again clearly satisfies the boundary conditions indicated in Lemma~\ref{lem:5.2} due to the boundary conditions (\ref{3.1}), and thus the proof goes through.
\end{proof}
The next step is proving the following modified version of Korn's first inequality.
\begin{lemma}
\label{lem:5.3}
There exists a constant $C$ such that the following holds:
\begin{itemize}
\item[(i)] If $K_G<0,$ then
\begin{equation}
\label{5.9}
\|\BF^\ast\|^2\leq \frac{C}{h^{4/3}}\|e(\BF^\ast)\|^2,
\end{equation}
for all $h\in(0,1)$ and $\Bu\in V.$
\item[(ii)] If $K_G>0,$ then for any $l<C_0$ one has
\begin{equation}
\label{5.10}
\|\BF^\ast\|^2\leq \frac{C}{h}\|e(\BF^\ast)\|^2,
\end{equation}
for all $h\in(0,1)$ and $\Bu\in V.$
\end{itemize}
\end{lemma}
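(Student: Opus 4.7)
The strategy is to feed suitable bounds on $\|u_t\|$, supplied by the Korn-Poincar\'e estimates of Lemma~\ref{lem:4.1}, into the Korn-type inequality of Lemma~\ref{lem:5.1}, and to close the resulting expression by Young's inequality. The positive-curvature case (ii) is almost immediate: Lemma~\ref{lem:4.1}(ii) yields $\|\Bu\|\leq C\|e(\BF)\|$ under $l<C_0$, and after the straightforward modification of its proof in which $e(\BF)$ is replaced by $e(\BF^\ast)$ (the difference consists of lower-order terms in $u_\Gth,u_z$ that are absorbed into the definite quadratic form $Q$), one has $\|u_t\|\leq C\|e(\BF^\ast)\|$; substituting into Lemma~\ref{lem:5.1} directly gives (\ref{5.10}).

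For case (i), $K_G<0$, only $\|u_\Gth\|$ and $\|u_z\|$ are controlled by Lemma~\ref{lem:4.1}(i), so an intermediate bound on $\|u_t\|$ must be obtained. The plan is to exploit the strict positivity $|K_G|=|\Gk_\Gth\Gk_z|\geq k^2>0$ to extract $u_t$ algebraically from the diagonal entries $F^\ast_{22}=u_{\Gth,\Gth}/A_\Gth+\Gk_\Gth u_t$ and $F^\ast_{33}=u_{z,z}/A_z+\Gk_z u_t$ via the identity
\[
\Gk_z F^\ast_{22}+\Gk_\Gth F^\ast_{33}=2K_G\,u_t+\frac{\Gk_z u_{\Gth,\Gth}}{A_\Gth}+\frac{\Gk_\Gth u_{z,z}}{A_z}.
\]
I would test this against $u_t$ with the natural weight $A_zA_\Gth$ and integrate by parts in $\Gth$ on the $u_{\Gth,\Gth}u_t$ term (moving a derivative off $u_t$ onto $u_\Gth$, which reproduces $u_{t,\Gth}=A_\Gth F^\ast_{12}$) and analogously in $z$ on the $u_{z,z}u_t$ term. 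Using Lemma~\ref{lem:4.1}(i) to bound $\|u_\Gth\|,\|u_z\|\leq C\|e(\BF^\ast)\|$ and absorbing the $\|u_t\|$-linear piece via Young's inequality then yields the key estimate
\[
\|u_t\|^2\leq C\bigl(\|e(\BF^\ast)\|^2+\|e(\BF^\ast)\|\cdot\|\BF^\ast\|\bigr).
\]
Feeding this into Lemma~\ref{lem:5.1}, the bilinear term becomes bounded by $C\|e(\BF^\ast)\|^2/h + C\|e(\BF^\ast)\|^{3/2}\|\BF^\ast\|^{1/2}/h$, and Young's inequality with conjugate exponents $4$ and $4/3$ converts the second piece into $\eps\|\BF^\ast\|^2+C_\eps h^{-4/3}\|e(\BF^\ast)\|^2$. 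Handling the remaining $\|e(\BF^\ast)\|\cdot\|\BF^\ast\|$ contribution by standard Young's and absorbing all $\eps\|\BF^\ast\|^2$ terms into the left hand side produces (\ref{5.9}).

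The main obstacle is the intermediate $u_t$-estimate in the negative-curvature case: without a direct Korn-Poincar\'e bound on $u_t$, one must combine the algebraic leverage of $|K_G|\geq k^2$ with careful integration by parts, which introduces the gradient-valued factors $\|F^\ast_{12}\|,\|F^\ast_{13}\|\leq\|\BF^\ast\|$ on the right and ultimately the subunit power $\|\BF^\ast\|^{1/2}$ in the bound for $\|u_t\|$. It is precisely this $1/2$-exponent, processed through the $4$-versus-$4/3$ Young interpolation with the $1/h$ coefficient from Lemma~\ref{lem:5.1}, that converts what would naively be an $h^{-2}$ scaling into the sharp $h^{-4/3}$.
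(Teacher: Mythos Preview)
Your proposal is correct and follows essentially the same route as the paper. The only cosmetic difference in case~(i) is that you extract $u_t$ from the symmetric combination $\Gk_z F^\ast_{22}+\Gk_\Gth F^\ast_{33}$ (leveraging $|K_G|\geq k^2$), whereas the paper uses just $F^\ast_{22}$ and $|\Gk_\Gth|\geq k$; both yield the identical intermediate bound $\|u_t\|^2\leq C(\|e(\BF^\ast)\|^2+\|\BF^\ast\|\cdot\|e(\BF^\ast)\|)$, and the closing Young-$\tfrac{4}{3}$ step you spell out is exactly what the paper compresses into the phrase ``combining (\ref{5.10.1}) and (\ref{5.3}).'' Your remark that Lemma~\ref{lem:4.1} must be rerun with $e(\BF^\ast)$ in place of $e(\BF)$ is a valid point that the paper handles only implicitly.
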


\begin{proof}
\textbf{The Case $K_G<0.$} We have by the estimates (\ref{2.5}), integration by parts and the triangle inequality, that
\begin{align*}
C_1\|u_t\|^2&\leq |(u_t, \Gk_\Gth u_t)|\\
&\leq C_2|(u_t, e(\BF_{22}^\ast))|+C_2\left|(u_t, \frac{u_{\Gth,\Gth}}{A_\Gth})\right|\\
&\leq C_2|(u_t, e(\BF_{22}^\ast))|+C_3|(u_{t,\Gth},u_{\Gth})|+C_3|(u_{t},u_{\Gth})|,
\end{align*}
thus owing to Lemma~\ref{4.1} and the Schwartz inequality we discover
$$
C_1\|u_t\|^2\leq C_4(\|u_t\|\cdot \|e(\BF^\ast)\|+\|\BF^\ast\|\cdot \|e(\BF^\ast)\|),
$$
which then itself implies the bound
\begin{equation}
\label{5.10.1}
\|u_t\|^2\leq C(\|\BF^\ast\|\cdot \|e(\BF^\ast)\|+\|e(\BF^\ast)\|^2).
\end{equation}
Finally, combining the estimates (\ref{5.10.1}) and (\ref{5.3}) we arrive at the estimate (\ref{5.9}).\\

\textbf{The Case $K_G>0.$} In this case inequality (\ref{5.10}) is a direct consequence of the estimate (\ref{5.3}) and the Korn-Poincar\'e inequality (\ref{4.1.5}). The Lemma is proven now.
\end{proof}

\begin{proof}[Proof of Theorem~\ref{th:3.2}]
The proof can now be completed in some short observations by combining the collected estimates.
We demonstrate here how it is  done in the case $K_G<0,$ and the case $ K_G>0$ is analogous.
It is straightforward to derive the estimate
\begin{equation}
\label{5.11}
\|\BF\|^2\leq \frac{C}{h^{4/3}}\|e(\BF)\|^2
\end{equation}
from the similar inequality (\ref{5.9}). Indeed, that step follows thanks to the estimates in (\ref{5.2}).
The last step is noticing, that by the definition of the simplified gradient $\BF$ we have the estimates
$$
\|\nabla \Bu-\BF\|^2\leq \min(Ch^2\|\nabla \Bu\|^2, Ch^2\|\BF\|^2),
$$
thus invoking the estimate (\ref{5.11}), the below estimates follow:
\begin{equation}
\label{5.12}
\|\nabla \Bu\|^2\leq (1+Ch^2)\|\BF\|^2,
\end{equation}
\begin{align*}
\|e(\Bu)-e(\BF)\|^2&\leq \|\nabla \Bu-\BF\|^2\\
&\leq Ch^2\|\BF\|^2\\
&\leq Ch^{2/3}\|e(\BF)\|^2,
\end{align*}
which gives
\begin{equation}
\label{5.13}
\|e(\BF)\|^2\leq (1+Ch^{2/3})\|e(\Bu)\|^2.
\end{equation}
A combination of the estimates (\ref{5.11}), (\ref{5.12}) and (\ref{5.13}) completes the proof of the theorem.

\end{proof}

\begin{proof}[Proof of Theorem~\ref{th:3.1}] The proof is analogous to the proof of Theorem~\ref{th:3.2} by utilizing Lemma~\ref{lem:5.1} and the bounds (\ref{5.2}).

\end{proof}

\section{The Ans\"atze}
\label{sec:6}
\setcounter{equation}{0}

This section contains the proof of Theorem~\ref{th:3.3}.\\
\textbf{The case $K_G<0.$} As mentioned in the introduction, the Ansatz in this case seems ingenious and is given in the book of Tovstik and Smirnov [\ref{bib:Tov.Smi.}]. The idea of the Ansatz construction is based on several important hypotheses, two of which are as follows: first, one assumes that the Ansatz depends linearly in the normal
$t$ variable, i.e.,
\begin{equation}
\label{6.1}
\Bu=\Bu_1+t\Bu_2.
\end{equation}
Next, when one substitutes the form (\ref{6.1}) of the gradient into the formula (\ref{2.4}), then one gets a representation
$e(\Bu)=e_0(\Bu_1,\Bu_2)+te_1(\Bu_1,\Bu_2)$ for the symmetrized gradient. The second hypothesis is to choose the displacements
$\Bu_1$ and $\Bu_2$ such that all the $tt$, $t\Gth$ and $tz$ components of the summand $e_0(\Bu_1,\Bu_2)$ vanish. Now, in what follows the $t$ dependence of all functions will be exclusively explicit and linear.
The fact, that the $tt$ component of $e_0(\Bu_1,\Bu_2)$ vanishes, gives
\begin{equation}
\label{6.2}
u_t=w(\Gth,z).
\end{equation}
Next, the fact that the $t\Gth$ and $tz$ components of $e_0(\Bu_1,\Bu_2)$ vanish implies, that there exist two functions $v$ and
$s,$ such that
\begin{equation}
\label{6.3}
\begin{cases}
u_t=w,\\
u_\Gth=v-t\left(\frac{w_{,\Gth}}{A_\Gth}-\kappa_\Gth v\right),\\
u_z=s-t\left(\frac{w_{,z}}{A_z}-\kappa_z s\right).
\end{cases}
\end{equation}
Assume next the function $f(\Gth,z)$ solves the transport equation\footnote{Note that, as the principal curvatures $\kappa_\Gth$ and $\kappa_z$ do not change sign, then in the case $\kappa_z<0$ the equation (\ref{6.1}) indeed reduces to a transport equation.}
\begin{equation}
\label{6.4}
\frac{\kappa_\Gth}{A_z^2}\left(\frac{\partial f}{\partial z}\right)^2+\frac{\kappa_z}{A_\Gth^2}\left(\frac{\partial f}{\partial \Gth}\right)^2=0,
\end{equation}
the solvability of which is classical, e.g., [\ref{bib:Evans}].
Denote furthermore $n(h)=\left[\frac{1}{n^{1/3}}\right],$ where $[x]$ is the integer part of $x.$
Then the choice of the functions
\begin{equation}
\label{6.5}
\begin{cases}
w=n(h)\varphi(\Gth,z)\sin(n(h)f(\Gth,z)),\\
v=A_\Gth\kappa_\Gth\frac{\varphi(\Gth,z)}{f_{,\Gth}(\Gth,z)}\cos(n(h)f(\Gth,z)),\\
s=A_z\kappa_z\frac{\varphi(\Gth,z)}{f_{,z}(\Gth,z)}\cos(n(h)f(\Gth,z)),\\
\end{cases}
\end{equation}
gives the desired Ansatz [\ref{bib:Tov.Smi.}]. Here, $\varphi(\Gth,z)$ is a smooth function supported on the mid-surface of the shell.
By calculating the gradient $\nabla\Bu,$ it is then straightforward to check that the inequalities (\ref{3.2}) and (\ref{3.5}) become asymptotically optimal as $h\to 0.$\\
\textbf{The case $K_G>0.$} In this case the construction is easier as one has to satisfy a weaker inequality. We simply choose $v=s=0$ in (\ref{6.3}) to get the form of the Ansatz
\begin{equation}
\label{6.6}
\begin{cases}
u_t=w\\
u_\Gth=-t\frac{w_{,\Gth}}{A_\Gth}\\
u_z=-t\frac{w_{,z}}{A_z},
\end{cases}
\end{equation}
where $w$ is a smooth function compactly supported on the mid-surface $S.$ We choose $w$ to have rapid oscillations in the $\Gth$ direction, i.e.,
we assume $w(\Gth,z)=W(\frac{\Gth}{h^\alpha},z).$ Doing this we make sure, that the $\Gth$ derivative of $w,$ i.e., the $t\Gth$ component of the gradient $\Bu$ is $\frac{1}{h^\alpha}$ times greater than $w.$ We finally choose $\alpha=\frac{1}{2}$ to maximize the quantity
$\frac{\|\nabla \Bu\|}{\|e(\Bu)\|}$ in terms of the asymptotics. We get in conclusion the Ansatz
\begin{equation}
\label{6.7}
\begin{cases}
u_t=W(\frac{\Gth}{\sqrt{h}},z)\\
u_\Gth=-\frac{t\cdot W_{,\Gth}\left(\frac{\Gth}{\sqrt h},z\right)}{A_\Gth{\sqrt h}}\\
u_z=-\frac{t\cdot W_{,z}\left(\frac{\Gth}{\sqrt h},z\right)}{A_z},
\end{cases}
\end{equation}
that gives the asymptotics $\frac{\|e(\Bu)\|^2}{\|\nabla \Bu\|^2}\sim h,$ $\|u_t\|\sim \|e(\Bu)\|,$ thus inequalities (\ref{3.2}) and (\ref{3.4}) become asymptotically optimal as $h\to 0.$\\

\section*{Acknowledgements.}
The author is grateful to Yury Grabovsky for many fruitful discussions, in particular
for giving hints about the exponents $4/3$ and $1$ in the main inequalities and pointing out
the book [\ref{bib:Tov.Smi.}] and the test functions in it. The author would like to also thank 
Jirayr Avetisyan for reading and spell-checking the entire manuscript.

\section*{Conflict of interests statement.}

There is no conflict of interests.

\end{document}